
\documentclass[preprint,12pt]{elsarticle}
\usepackage{amsfonts}


 \usepackage{graphics}
\usepackage{graphicx}
 \usepackage{epsfig}

\usepackage{amssymb}
\usepackage{amsthm}

\newtheorem{thm}{Theorem}
\newtheorem{lem}[thm]{Lemma}
\newdefinition{rmk}{Remark}
\newproof{pf}{Proof}
\newdefinition{example}{Example}
\newdefinition{definition}{Definition}
\newdefinition{proposition}{Proposition}
\newdefinition{corollary}{Corollary}




\journal{arXiv}

\begin{document}

\begin{frontmatter}



\title{Improvements on the infinity norm bound for the inverse of Nekrasov matrices}


\author[rvt]{Chaoqian Li}
\author[rvt]{Hui Pei}
\author[rvt]{Aning Gao}
\author[rvt]{Yaotang Li\corref{cor1}}
\ead{liyaotang@ynu.edu.cn}

\cortext[cor1]{Corresponding author.}

\address[rvt]{School of Mathematics and Statistics, Yunnan
University, Kunming, Yunnan,  P. R. China 650091}
\begin{abstract}
We focus on the estimating problem of  the infinity norm of the
inverse of Nekrasov matrices, give new bounds which involve a
parameter, and then determine the optimal value of the parameter
such that the new bounds are better than those in L. Cvetkovi\'c et al. (2013) \cite{Cv3}.  Numerical examples are given
to illustrate the corresponding results.
\end{abstract}
\begin{keyword} Infinity norm; Nekrasov matrices; H-matrices
\MSC[2010] 15A60, 15A45, 65F35. 
\end{keyword}

\end{frontmatter}


\section{Introduction}
A matrix $A=(a_{ij})\in C^{n, n}$ is called an $H$-matrix if its
comparison matrix $ <A>=[m_{ij}]$ defined by
\[ <A>=[m_{ij}]\in  C^{n, n},m_{ij}=\left\{ \begin{array}{cc}
  |a_{ii}|,  &i=j \\
 -|a_{ij}|,  &i\neq j,
\end{array} \right.\]
is an $M$-matrix, i.e., $<A>^{-1}\geq 0$ \cite{Ba,Cv1,Cv2}.
$H$-matrices has a large number of applications. One special
interest problem among them is to find upper bounds of the infinity
norm of $H$-matrices, since it can be used for proving the
convergence of matrix splitting and matrix multisplitting iteration
methods for solving large sparse systems of linear equations, see
\cite{Ba,Cv3,Hu,Hu1}. Many researchers gave some well-known bounds.
In 1975, J.M. Varah\cite{Va} provided the follwoing upper bound for
strictly diagonally dominant (SDD) matrices as one most important
subclass of $H$-matrices. Here a matrix $A=[a_{ij}]\in C^{n, n}$ is
called SDD if for each $i\in N=\{1,2,\ldots,n\}$,
\[|a_{ii}|>r_i(A),\]
where $r_i(A)=\sum\limits_{j\neq i} |a_{ij}|$.

\begin{thm} \emph{\cite{Va}} \label{th1.1}
Let $A=[a_{ij}]\in C^{n, n}$ be SDD. Then
\[ ||A^{-1}||_\infty \leq \frac{1}{\min\limits_{i\in N} (|a_{ii}|-r_i(A))}.\]
\end{thm}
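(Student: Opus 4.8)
The plan is to use the dual characterization of the induced infinity norm, namely that $\|A^{-1}\|_\infty$ is the smallest constant $C$ for which $\|A^{-1}x\|_\infty \le C\|x\|_\infty$ holds for every $x$, and to reduce the problem to a lower bound of the shape $\|Ay\|_\infty \ge \mu\,\|y\|_\infty$. First I would observe that, since $A$ is SDD, each quantity $|a_{ii}|-r_i(A)$ is strictly positive, so that $\mu := \min_{i\in N}(|a_{ii}|-r_i(A)) > 0$. In particular $A$ is nonsingular (being an $H$-matrix), so $A^{-1}$ exists and the right-hand side of the claimed inequality is well defined.

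The key step is an elementary pointwise estimate. Take an arbitrary nonzero $y\in C^{n}$ and let $k\in N$ be an index at which $|y_k| = \|y\|_\infty$. Examining the $k$-th coordinate of $Ay$ and isolating the diagonal entry gives $(Ay)_k = a_{kk}y_k + \sum_{j\neq k} a_{kj}y_j$; applying the reverse triangle inequality together with $|y_j|\le |y_k| = \|y\|_\infty$ for every $j$ then yields
\[
|(Ay)_k| \ge |a_{kk}|\,|y_k| - \sum_{j\neq k}|a_{kj}|\,|y_j| \ge \bigl(|a_{kk}|-r_k(A)\bigr)\|y\|_\infty \ge \mu\,\|y\|_\infty .
\]
Since $\|Ay\|_\infty \ge |(Ay)_k|$, this establishes $\|Ay\|_\infty \ge \mu\,\|y\|_\infty$ for all $y$.

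To finish, I would substitute $y = A^{-1}x$ for an arbitrary $x\in C^{n}$, obtaining $\|x\|_\infty \ge \mu\,\|A^{-1}x\|_\infty$, that is, $\|A^{-1}x\|_\infty \le \mu^{-1}\|x\|_\infty$. Taking the supremum over all $x$ with $\|x\|_\infty = 1$ gives $\|A^{-1}\|_\infty \le 1/\mu$, which is precisely the asserted bound.

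I do not anticipate a genuine obstacle: the argument is routine once the dual characterization of the operator norm is in place. The single point that must be handled correctly is the choice of the extremal index $k$, the coordinate at which $y$ attains its maximum modulus. It is exactly this choice that lets the off-diagonal sum be bounded by $r_k(A)\|y\|_\infty$ and thereby converts the SDD hypothesis into the desired norm estimate; selecting any other index would fail to control the off-diagonal contribution.
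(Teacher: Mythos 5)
Your proof is correct. The paper itself gives no proof of this statement---it is quoted directly from Varah's 1975 paper as a known result---so there is nothing internal to compare against; your argument (pick the coordinate $k$ where $|y_k|=\|y\|_\infty$, bound $|(Ay)_k|$ from below by $(|a_{kk}|-r_k(A))\|y\|_\infty\ge\mu\|y\|_\infty$, then set $y=A^{-1}x$) is precisely the standard one, and is essentially Varah's original derivation. One small remark: you do not actually need to invoke the $H$-matrix machinery to justify nonsingularity, since your own estimate $\|Ay\|_\infty\ge\mu\|y\|_\infty>0$ for $y\neq 0$ already shows $A$ is injective, hence invertible.
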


We call the bound in Theorem \ref{th1.1} the Varah's bound. As
Cvetkovi\'c  et al. \cite{Cv3} said, the Varah's bound works only
for SDD matrices, and even then it is not always good enough. Hence,
it can be useful to obtain new upper bounds for a wider class of
matrices which  sometimes works better in the SDD case. In
\cite{Cv3}, Cvetkovi\'c  et al. study the class of Nekrasov matrices
which contains SDD matrices and is a subclass of $H$-matrices, and
give the following bounds (see Theorem \ref{th1.2}).

\begin{definition} \cite{Cv2,Cv3}
A matrix $A=[a_{ij}]\in C^{n, n}$ is called a Nekrasov matrix if for
each $i\in N$,
\[|a_{ii}|>h_i(A),\]
where $h_1(A)=r_1(A)=\sum\limits_{j\neq 1} |a_{1j}|$ and
$h_i(A)=\sum\limits_{j=1}^{i-1}\frac{|a_{ij}|}{|a_{jj}|}h_j(A)+\sum\limits_{j=i+1}^{n}|a_{ij}|$,
$i=2,3,\ldots,n$.
\end{definition}

\begin{thm} \emph{\cite[Theorem 2]{Cv3}} \label{th1.2}
Let $A=[a_{ij}]\in C^{n, n}$ be a Nekrasov matrix. Then
\begin{equation} \label{eq1.1} ||A^{-1}||_\infty \leq \frac{\max\limits_{i\in
N}\frac{z_i(A)}{|a_{ii}|}} { 1-\max\limits_{i\in N}
\frac{h_i(A)}{|a_{ii}|}},\end{equation}
and
\begin{equation} \label{eq1.10} ||A^{-1}||_\infty \leq \frac{\max\limits_{i\in
N}z_i(A)} { \min\limits_{i\in N}(|a_{ii}|- h_i(A))},\end{equation}
where $z_1(A)=1$ and $z_i(A)=\sum\limits_{j=1}^{i-1}
\frac{|a_{ij}|}{|a_{jj}|}z_j(A)+1,$ $i=2,3\ldots,n$.
\end{thm}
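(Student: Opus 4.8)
The plan is to pass to the comparison matrix and then exploit a lower-triangular structure that is hidden inside the recursions defining $h_i$ and $z_i$. Since a Nekrasov matrix is an $H$-matrix, the standard comparison-matrix inequality $|A^{-1}|\le <A>^{-1}$ holds entrywise, and therefore $\|A^{-1}\|_\infty\le\|<A>^{-1}\|_\infty$; so it suffices to estimate $\|<A>^{-1}\|_\infty$. I would write $<A>=D-L-U$, where $D=\mathrm{diag}(|a_{11}|,\dots,|a_{nn}|)$ and $L,U\ge 0$ are the strictly lower and strictly upper triangular parts of $-<A>$ (thus $L_{ij}=|a_{ij}|$ for $j<i$ and $U_{ij}=|a_{ij}|$ for $j>i$). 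Note $D-L$ is a lower triangular $M$-matrix, so $(D-L)^{-1}=\big(\sum_{k\ge 0}(D^{-1}L)^{k}\big)D^{-1}\ge 0$ (the Neumann series terminates because $D^{-1}L$ is nilpotent).

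The conceptual heart of the argument, and the step I expect to be the main obstacle, is recognizing the defining recursions as forward substitutions for $D-L$. Let $e=(1,\dots,1)^{T}$. Solving $(D-L)w=e$ row by row gives $|a_{ii}|w_i=1+\sum_{j<i}|a_{ij}|w_j$, which is exactly the recursion satisfied by $z_i/|a_{ii}|$; hence $(D-L)^{-1}e$ has $i$-th entry $z_i(A)/|a_{ii}|$. Likewise the $i$-th entry of $Ue$ is $\sum_{j>i}|a_{ij}|$, and solving $(D-L)v=Ue$ reproduces the recursion for $h_i/|a_{ii}|$, so $(D-L)^{-1}Ue$ has $i$-th entry $h_i(A)/|a_{ii}|$. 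Writing $P=(D-L)^{-1}U\ge 0$, these identities give $\|(D-L)^{-1}e\|_\infty=\max_i z_i(A)/|a_{ii}|$ and, since $Pe=(D-L)^{-1}Ue\ge 0$, $\|P\|_\infty=\max_i(Pe)_i=\max_i h_i(A)/|a_{ii}|$, which is strictly less than $1$ precisely because $A$ is a Nekrasov matrix.

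With this in hand the two bounds follow from one splitting identity. Put $u=<A>^{-1}e$; since $<A>^{-1}\ge 0$ and $e>0$ we have $u\ge 0$ and $\|<A>^{-1}\|_\infty=\|<A>^{-1}e\|_\infty=\max_i u_i$. From $<A>u=e$, i.e. $(D-L)u=e+Uu$, we obtain the exact identity $u=(D-L)^{-1}e+Pu$. For the bound (\ref{eq1.1}) I would take infinity norms and use $\|P\|_\infty<1$:
\[ \|u\|_\infty\le\|(D-L)^{-1}e\|_\infty+\|P\|_\infty\,\|u\|_\infty\quad\Longrightarrow\quad \|u\|_\infty\le\frac{\max_{i}z_i(A)/|a_{ii}|}{1-\max_{i}h_i(A)/|a_{ii}|}. \]
For (\ref{eq1.10}) I would instead bound $Uu\le\|u\|_\infty\,Ue$ entrywise (valid since $U\ge 0$ and $u\ge 0$) and apply $(D-L)^{-1}\ge 0$ to get $u_i\le (z_i(A)+\|u\|_\infty\,h_i(A))/|a_{ii}|$ for every $i$; evaluating this at an index $i^{*}$ attaining $u_{i^{*}}=\|u\|_\infty$ yields $\|u\|_\infty\,(|a_{i^{*}i^{*}}|-h_{i^{*}}(A))\le z_{i^{*}}(A)$, hence $\|u\|_\infty\le z_{i^{*}}(A)/(|a_{i^{*}i^{*}}|-h_{i^{*}}(A))\le\max_i z_i(A)/\min_i(|a_{ii}|-h_i(A))$. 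The only points requiring care are the nonnegativity facts that license applying $(D-L)^{-1}$ to inequalities and the positivity of $\min_i(|a_{ii}|-h_i(A))$, both of which are immediate consequences of the Nekrasov hypothesis.
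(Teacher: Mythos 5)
Your proof is correct, and it rests on the same foundations the paper uses (via the $\mu$-parametrized generalization in Lemma~\ref{le 2.3} and Theorem~\ref{th 2.1}, which reduce to this statement at $\mu=1$): the comparison-matrix inequality $|A^{-1}|\le\, <A>^{-1}$, the nonnegativity of $(D-L)^{-1}$, and the identification of $z_i(A)/|a_{ii}|$ and $h_i(A)/|a_{ii}|$ as the entries of $(D-L)^{-1}e$ and $(D-L)^{-1}Ue$ (the paper's Lemmas~\ref{le 2.1} and~\ref{le 2.2}). Where you diverge is in how these facts are converted into norm bounds. The paper factors $<A>=(D-L)(I-P)$ with $P=(D-L)^{-1}U$, observes that $C=I-P$ and $B=|D|C$ are SDD, applies Varah's bound to each, and multiplies the resulting norms; you instead work with the single vector $u=\,<A>^{-1}e$ and the identity $u=(D-L)^{-1}e+Pu$, extracting (\ref{eq1.1}) from $\|P\|_\infty<1$ and (\ref{eq1.10}) from an entrywise estimate at the maximizing index. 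For (\ref{eq1.1}) the two routes give literally the same number, since Varah's bound applied to $I-P$ is exactly $1/(1-\|P\|_\infty)$. For (\ref{eq1.10}) your argument is genuinely leaner: it avoids introducing the scaled matrix $B$ altogether and in fact proves the slightly sharper intermediate bound $\|u\|_\infty\le z_{i^{*}}(A)/\bigl(|a_{i^{*}i^{*}}|-h_{i^{*}}(A)\bigr)$ at the single index $i^{*}$ where $u$ attains its maximum, before relaxing to the stated quotient of a max over a min. What the paper's multiplicative factorization buys in exchange is that it generalizes mechanically to the diagonal scaling $D(\mu)$, which is the whole point of Theorem~\ref{th 2.1}; your fixed-point argument would have to be reworked to accommodate that parameter.
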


Since an SDD matrix is a Nekrasov matrices \cite{Cv2,Li}, the bounds
(\ref{eq1.1}) and (\ref{eq1.10}) can be also applied to  SDD
matrices. However, the Varah's bound can not be used to estimate the
infinity norm of the inverse of Nekrasov matrices. Furthermore, when
we use both bounds to estimate the infinity norm of the inverse of
SDD matrices, the bound (\ref{eq1.1}) or (\ref{eq1.10}) works better
than the Varah's bound in some cases (for details, see \cite{Cv3}).

In this paper, we also focus on the estimating problem of the
infinity norm of the inverse of Nekrasov matrices, and give new
bounds which involve a parameter $\mu$ based on the bounds in
Theorem \ref{th1.2}, and then determine the optimal value of $\mu$
such that the new bounds are better than those in Theorem
\ref{th1.2} (Theorem 2 in \cite {Cv3}). Numerical examples are given
to illustrate the corresponding results.

\section {New bounds for the infinity norm of the inverse of Nekrasov matrices}
First, some lemmas and notations are listed. Given a matrix
$A=[a_{ij}]$, by $A=D-L- U$ we denote the standard splitting of $A$
into its diagonal $(D)$, strictly lower $(-L)$ and strictly upper
$(-U)$ triangular parts. And by $[A]_{ij}$ we denote the
$(i,j)$-entry of $A$, that is, $[A]_{ij}=a_{ij}$.

\begin{lem}\label{le2.0} \emph{\cite{Be}}
Let $A=[a_{ij}]\in C^{n,n}$ be a nonsingular $H$-matrix. Then
\[ |A^{-1}|\leq ~<A>^{-1}.\]
\end{lem}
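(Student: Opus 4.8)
The plan is to prove the entrywise inequality by passing through the Neumann series expansions of $A^{-1}$ and $<A>^{-1}$ and comparing them term by term. First I would use the standard splitting $A=D-L-U$ and write $A=D-C$, where $C=L+U$ collects the off-diagonal part, so that the comparison matrix factors as $<A>=|D|-|C|$ with $|D|=\mathrm{diag}(|a_{11}|,\ldots,|a_{nn}|)$ and $|C|$ the matrix whose $(i,j)$-entry is $|a_{ij}|$ for $i\neq j$ and $0$ on the diagonal. Since $A$ is a nonsingular $H$-matrix its diagonal entries are nonzero, so $D$ is invertible; factoring out the diagonal gives $A=D(I-D^{-1}C)$ and $<A>=|D|(I-|D|^{-1}|C|)$, hence
\[
A^{-1}=(I-D^{-1}C)^{-1}D^{-1}, \qquad <A>^{-1}=(I-|D|^{-1}|C|)^{-1}|D|^{-1}.
\]

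Next I would invoke the hypothesis that $<A>$ is a nonsingular $M$-matrix. The key characterization of this fact is that $\rho(|D|^{-1}|C|)<1$, so that $(I-|D|^{-1}|C|)^{-1}=\sum_{k=0}^{\infty}(|D|^{-1}|C|)^{k}$ converges and has nonnegative entries. Since $D$ is diagonal, the entrywise identity $|D^{-1}C|=|D|^{-1}|C|$ holds, and because the spectral radius is monotone under entrywise absolute value, $\rho(D^{-1}C)\le\rho(|D^{-1}C|)=\rho(|D|^{-1}|C|)<1$. Therefore the Neumann series $(I-D^{-1}C)^{-1}=\sum_{k=0}^{\infty}(D^{-1}C)^{k}$ also converges.

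The heart of the argument is then a term-by-term comparison. Using the triangle inequality together with the entrywise submultiplicativity $|XY|\le|X|\,|Y|$ (valid entrywise for any conformable matrices, and giving $|(D^{-1}C)^{k}|\le|D^{-1}C|^{k}$ by induction), I would estimate
\[
\bigl|(I-D^{-1}C)^{-1}\bigr|=\Bigl|\sum_{k=0}^{\infty}(D^{-1}C)^{k}\Bigr|\le\sum_{k=0}^{\infty}\bigl|(D^{-1}C)^{k}\bigr|\le\sum_{k=0}^{\infty}|D^{-1}C|^{k}=(I-|D|^{-1}|C|)^{-1}.
\]
Combining this with $|D^{-1}|=|D|^{-1}$ and one further application of $|XY|\le|X|\,|Y|$ yields $|A^{-1}|\le\bigl|(I-D^{-1}C)^{-1}\bigr|\,|D^{-1}|\le(I-|D|^{-1}|C|)^{-1}|D|^{-1}=<A>^{-1}$, which is the claim.

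The main obstacle I anticipate is the careful justification that the $H$-matrix hypothesis delivers $\rho(|D|^{-1}|C|)<1$, and hence both the convergence of the two Neumann series and the legitimacy of interchanging the absolute value with the infinite sum; once the spectral radius bound is in hand, the remaining inequalities are routine entrywise manipulations. Since the lemma is cited from \cite{Be}, I would expect the authors simply to reference this standard $M$-matrix/$H$-matrix fact rather than reproduce the full argument.
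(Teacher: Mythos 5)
Your proof is correct. The paper itself gives no argument for this lemma---it is stated as a known result and attributed to the reference [Be] (Berman and Plemmons)---so there is nothing to compare against; your Neumann-series argument, factoring $A=D(I-D^{-1}C)$ and $<A>=|D|(I-|D|^{-1}|C|)$, using $\rho(|D|^{-1}|C|)<1$ from the nonsingular $M$-matrix characterization, and comparing the two series entrywise, is precisely the standard proof found in that reference, and every step (including the needed $\rho(D^{-1}C)\le\rho(|D^{-1}C|)$ and the invertibility of $I-D^{-1}C$) is justified. As you anticipated, the authors simply cite the fact rather than reproduce this argument.
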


\begin{lem}\emph{\cite{Ro}}\label{le 2.1}
Given any matrix $A=[a_{ij}]\in C^{n,n}$, $n\geq 2$, with
$a_{ii}\neq 0$ for all $i\in N$, then \[h_i(A)=
|a_{ii}|\left[(|D|-|L|)^{-1}|U|e\right]_i,\] where $e\in C^{n,n}$ is
the vector with all components equal to 1.
\end{lem}

\begin{lem}\label{le 2.2} \emph{\cite{Sz}}
A matrix $A=[a_{ij}]\in C^{n,n}$, $n\geq 2$ is  a Nekrasov matrix if
and only if  \[(|D|-|L|)^{-1}|U|e<e,\] i.e., if and only if
$E-(|D|-|L|)^{-1}|U|$ is an SDD matrix, where $E$ is the identity
matrix.
\end{lem}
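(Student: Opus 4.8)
The plan is to derive everything directly from the identity in Lemma~\ref{le 2.1}. Set $w=(|D|-|L|)^{-1}|U|e$, so that its $i$-th component is $w_i=\left[(|D|-|L|)^{-1}|U|e\right]_i$. Because the diagonal entries satisfy $a_{ii}\neq0$ (this is the standing hypothesis of Lemma~\ref{le 2.1}; for a Nekrasov matrix it is automatic from $|a_{ii}|>h_i(A)\geq0$), Lemma~\ref{le 2.1} gives $h_i(A)=|a_{ii}|\,w_i$ with $|a_{ii}|>0$. Dividing the Nekrasov inequality $|a_{ii}|>h_i(A)$ by $|a_{ii}|$ then shows, for each $i\in N$, that $|a_{ii}|>h_i(A)$ holds if and only if $w_i<1$. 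Hence $A$ is Nekrasov if and only if $w<e$, i.e. $(|D|-|L|)^{-1}|U|e<e$; this is the first equivalence, and it costs nothing beyond Lemma~\ref{le 2.1}.

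To recast this as strict diagonal dominance, I would put $M=(|D|-|L|)^{-1}|U|$ and $B=E-M$. The key preliminary fact is $M\geq0$ entrywise. Writing $|D|-|L|=|D|\left(E-|D|^{-1}|L|\right)$, the matrix $|D|^{-1}|L|$ is nonnegative and strictly lower triangular, hence nilpotent, so $\left(E-|D|^{-1}|L|\right)^{-1}=\sum_{k\geq0}\left(|D|^{-1}|L|\right)^{k}$ is a finite nonnegative sum; combined with $|D|^{-1}\geq0$ and $|U|\geq0$ this gives $M\geq0$. Therefore the off-diagonal entries of $B$ are $-m_{ij}\leq0$ while its diagonal entries are $1-m_{ii}$, and the off-diagonal absolute row sums are $\sum_{j\neq i}|{-m_{ij}}|=\sum_{j\neq i}m_{ij}=w_i-m_{ii}$, using $\sum_j m_{ij}=[Me]_i=w_i$.

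Finally I would match the SDD condition to $w<e$. If $w<e$, nonnegativity yields $m_{ii}\leq w_i<1$, so $1-m_{ii}>0$, $|b_{ii}|=1-m_{ii}$, and the dominance inequality $|b_{ii}|>\sum_{j\neq i}|b_{ij}|$ becomes $1-m_{ii}>w_i-m_{ii}$, i.e. $w_i<1$; thus $w<e$ makes $B$ strictly diagonally dominant, and the same chain run in reverse (once $1-m_{ii}>0$ is known) recovers $w<e$ from the SDD hypothesis. I expect this last matching to be the step requiring the most care: everything hinges on the sign of the diagonal entries $1-m_{ii}$, and it is exactly the nonnegativity of $M$ together with $w_i<1$ that forces $1-m_{ii}>0$, so that $|b_{ii}|=1-m_{ii}$ rather than $m_{ii}-1$. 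If one did not control this sign, the equivalence between the diagonal-dominance inequality and $w_i<1$ would fail, so pinning down that the natural diagonal of $B$ is positive is the crux of the argument.
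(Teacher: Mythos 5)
The paper offers no proof of this lemma at all --- it is quoted from Szulc \cite{Sz} --- so your argument has to stand on its own merits. Most of it does. The first equivalence (Nekrasov $\Longleftrightarrow (|D|-|L|)^{-1}|U|e<e$) follows correctly from Lemma~\ref{le 2.1} after dividing by $|a_{ii}|>0$, and the implication $(|D|-|L|)^{-1}|U|e<e \Rightarrow E-(|D|-|L|)^{-1}|U|$ is SDD is also complete: your Neumann-series argument for the entrywise nonnegativity of $M=(|D|-|L|)^{-1}|U|$ is sound, and nonnegativity gives $m_{ii}\leq w_i<1$, which fixes the sign of the diagonal of $E-M$ and turns the dominance inequality into $w_i<1$. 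This is everything the paper actually uses (it only ever invokes ``$A$ Nekrasov $\Rightarrow$ $B$ and $C$ are SDD'').

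The gap is in the remaining implication, ``$E-M$ SDD $\Rightarrow w<e$.'' You correctly identify that everything hinges on $1-m_{ii}>0$, but the only way you establish that sign is from $w_i<1$ --- which is exactly the conclusion you are after in this direction --- so ``the same chain run in reverse (once $1-m_{ii}>0$ is known)'' is circular. Moreover, under the paper's definition of SDD (which places no condition on the sign of the diagonal) the sign cannot be recovered and the implication is in fact false. Take
\[
A=\left[\begin{array}{cc}1&1/2\\ 4&1\end{array}\right],\quad
(|D|-|L|)^{-1}|U|=\left[\begin{array}{cc}0&1/2\\ 0&2\end{array}\right],\quad
E-(|D|-|L|)^{-1}|U|=\left[\begin{array}{cc}1&-1/2\\ 0&-1\end{array}\right].
\]
Here $E-M$ is SDD ($1>1/2$ and $|-1|>0$), yet $Me=(1/2,\,2)^{T}\not<e$ and $A$ is not Nekrasov since $h_2(A)=2>1=|a_{22}|$. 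So the ``i.e.''\ in the statement is only an equivalence if SDD is read as including positivity of the diagonal entries of $E-M$; as literally stated, the reverse implication fails, and no argument can close your gap without adding that hypothesis. The clean fix is to prove the equivalence of ``Nekrasov'' with $(|D|-|L|)^{-1}|U|e<e$ exactly as you did, and then record only that this condition implies (and, \emph{given} $m_{ii}<1$ for all $i$, is implied by) strict diagonal dominance of $E-(|D|-|L|)^{-1}|U|$.
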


Let\[C=E-(|D|-|L|)^{-1}|U|=[c_{ij}]\] and
\[B=|D|C= |D|-|D|(|D|-|L|)^{-1}|U|=[b_{ij}]\]
and
 Then from Lemma \ref{le 2.2}, $B$
and $C$ are SDD when $A$ is a Nekrasov matrix. Note that $c_{11}=1$,
$c_{k1}=0$, $k=2,3,\ldots,n$, and
$c_{1k}=-\frac{|a_{1k}|}{|a_{11}|}$, $k=2,3,\ldots,n$, and that
$b_{11}=|a_{11}|$, $b_{k1}=0$, $k=2,3,\ldots,n$, and
$b_{1k}=-|a_{1k}|$, $k=2,3,\ldots,n$, which lead to the following
lemma.

\begin{lem}\label{le 2.3}
Let $A=[a_{ij}] \in C^{n,n}$ be a Nekrasov matrix,
\begin{equation} \label{eq2.0}
C(\mu)=CD(\mu)=\left(E-(|D|-|L|)^{-1}|U|\right)D(\mu),\end{equation}
and
\begin{equation}
\label{eq2.00}
B(\mu)=BD(\mu)=\left(|D|-|D|(|D|-|L|)^{-1}|U|\right)D(\mu),\end{equation}
where $D(\mu)=diag(\mu,1,\cdots,1)$ and $\mu>
\frac{r_1(A)}{|a_{11}|}$. Then $C(\mu)$ and  $B(\mu)$  are SDD,
\begin{equation} \label{eq2.1} ||C(\mu)^{-1}||_\infty \leq
\max\left\{\frac{1}{\mu - \frac{h_1(A)}{|a_{11}|}}, \frac{1}{
1-\max\limits_{i\neq
1}\frac{h_i(A)}{|a_{ii}|}}\right\},\end{equation} and
\begin{equation} \label{eq2.10} ||B(\mu)^{-1}||_\infty \leq
\frac{1}{\min\left\{\mu|a_{11}| - h_1(A), \min\limits_{i\neq
1}(|a_{ii}|-h_i(A))\right\}}. \end{equation}
 \end{lem}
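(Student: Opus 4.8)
The plan is to reduce both inequalities to Varah's bound (Theorem \ref{th1.1}), applied to the auxiliary matrices $C(\mu)$ and $B(\mu)$. The reason for multiplying on the right by $D(\mu)$ is that it rescales only the first column: $[C(\mu)]_{ij}=c_{ij}$ and $[B(\mu)]_{ij}=b_{ij}$ for all $j\geq 2$, while $[C(\mu)]_{i1}=\mu c_{i1}$ and $[B(\mu)]_{i1}=\mu b_{i1}$. First I would record the sign pattern of $C$: since $|D|-|L|$ is a lower-triangular $M$-matrix, $(|D|-|L|)^{-1}\geq 0$, and because $|U|\geq 0$ this forces $(|D|-|L|)^{-1}|U|\geq 0$; hence every off-diagonal entry of $C$, and of $B=|D|C$, is nonpositive, while the diagonal entries stay positive because $C$ is SDD.

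I would then use Lemma \ref{le 2.1} to express the SDD gaps directly, avoiding any separate evaluation of the individual off-diagonal sums. The identity $Ce=e-(|D|-|L|)^{-1}|U|e$ together with Lemma \ref{le 2.1} gives $[Ce]_i=1-\frac{h_i(A)}{|a_{ii}|}$; since the off-diagonal entries of $C$ are nonpositive, $[Ce]_i=|c_{ii}|-r_i(C)$, so the SDD gap of row $i$ of $C$ is exactly $1-\frac{h_i(A)}{|a_{ii}|}$, and scaling by $|D|$ yields $[Be]_i=|a_{ii}|-h_i(A)$ as the gap of row $i$ of $B$. In row $1$ the diagonal satisfies $c_{11}=1$, so here the off-diagonal absolute row sum is $r_1(C)=1-[Ce]_1=\frac{h_1(A)}{|a_{11}|}$, and likewise $r_1(B)=h_1(A)$.

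The decisive structural fact is that $c_{k1}=0=b_{k1}$ for $k\geq 2$, so rescaling the first column by $\mu$ leaves every row $i\geq 2$ entirely unchanged and modifies only the $(1,1)$ entry. Consequently the Varah gap $|[C(\mu)]_{ii}|-r_i(C(\mu))$ equals $\mu-\frac{h_1(A)}{|a_{11}|}$ in row $1$ (using $h_1(A)=r_1(A)$) and equals $1-\frac{h_i(A)}{|a_{ii}|}$ in each row $i\geq 2$; all of these are positive precisely because $\mu>\frac{r_1(A)}{|a_{11}|}$ and $C$ is SDD by Lemma \ref{le 2.2}, which establishes that $C(\mu)$ is SDD. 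The corresponding gaps for $B(\mu)$ are $\mu|a_{11}|-h_1(A)$ and $|a_{ii}|-h_i(A)$. Substituting these into Theorem \ref{th1.1} gives $||C(\mu)^{-1}||_\infty\leq 1/\min\{\mu-\frac{h_1(A)}{|a_{11}|},\ \min_{i\neq 1}(1-\frac{h_i(A)}{|a_{ii}|})\}$ and the analogous bound for $B(\mu)$. Rewriting the reciprocal of a minimum as the maximum of the reciprocals, and using $\min_{i\neq 1}(1-\frac{h_i(A)}{|a_{ii}|})=1-\max_{i\neq 1}\frac{h_i(A)}{|a_{ii}|}$, turns the first estimate into (\ref{eq2.1}), while the second is already in the form (\ref{eq2.10}).

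The only delicate point --- the main obstacle --- is the bookkeeping in the second paragraph: one must pair Lemma \ref{le 2.1} with the nonpositivity of the off-diagonal entries so that the signed quantity $[Ce]_i$ is recognized as the absolute-value SDD gap $|c_{ii}|-r_i(C)$, and one must keep track of the fact that the clean identity $r_1(C)=\frac{h_1(A)}{|a_{11}|}$ in row $1$ relies on $c_{11}=1$, whereas for $i\geq 2$ it is the whole gap, not the row sum alone, that simplifies. Once those identifications are secured, the column-scaling observation plus Varah's bound finish the argument mechanically.
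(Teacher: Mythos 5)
Your proposal is correct and follows essentially the same route as the paper: establish that right-multiplication by $D(\mu)$ only rescales the first column (which is zero below the diagonal, so rows $i\geq 2$ are untouched), identify the Varah gaps of $C(\mu)$ and $B(\mu)$ as $\mu-\frac{h_1(A)}{|a_{11}|}$, $1-\frac{h_i(A)}{|a_{ii}|}$ (resp.\ $\mu|a_{11}|-h_1(A)$, $|a_{ii}|-h_i(A)$) via Lemma \ref{le 2.1}, and apply Theorem \ref{th1.1}. The only cosmetic difference is that you obtain the gap from the row-sum identity $[Ce]_i$ together with the sign pattern of $C$, whereas the paper computes $|c_{ii}|$ and $r_i(C)$ separately from the entries of $(|D|-|L|)^{-1}|U|$ and then adds them; the content is identical.
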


\begin{proof} We first prove (\ref{eq2.1}) holds. It is not difficult from (\ref{eq2.0}) to see that
$[C(\mu)]_{k1}=\mu c_{k1}$ for all $k\in N$ and
$[C(\mu)]_{kj}=c_{kj}$ for all $k\in N$ and $j\neq 1$. Hence
\[[C(\mu)]_{11}=\mu,~ r_1(C(\mu))=r_1(C)=\frac{r_1(A)}{|a_{11}|}\]
and for $i=2,\ldots,n$, \[ [C(\mu)]_{ii}=c_{ii},~
r_i(C(\mu))=r_i(C).\] From $C$ is SDD and $\mu>
\frac{r_1(A)}{|a_{11}|}$, we have that $C(\mu) $ is SDD.

Moreover, by applying the Varah's bound to estimate the infinity
norm of its inverse matrix, we can obtain
\[|| C(\mu)^{-1}||_\infty \leq \max\limits_{i\in N}
\frac{1}{|[C(\mu)]_{ii}|- r_i(C(\mu) )}=\max\left\{\frac{1}{\mu -
r_1(C)}, \max\limits_{i\neq 1} \frac{1}{|c_{ii}|- r_i(C)}\right\}.\]
Note that $C=E-(|D|-|L|)^{-1}|U|=[c_{ij}]$ and all diagonal entries
of matrix $(|D|-|L|)^{-1}|U|$ are less than 1. Then we have that for
$i\in N, ~i\neq 1, $
\[|c_{ii}|= 1-\left[(|D|-|L|)^{-1}|U| \right]_{ii}\]
and that for each $i\in N$, \[ r_i(C)=\sum\limits_{k\neq
i}\left[(|D|-|L|)^{-1}|U| \right]_{ik}.\] These lead to that (also
see the proof of Theorem 2 in \cite{Cv3}) for $i\in N,~i\neq 1$,
\[ |c_{ii}|-r_i(C)=1-\sum\limits_{k\in N} \left[(|D|-|L|)^{-1}|U| \right]_{ik}=
1-\left[(|D|-|L|)^{-1}|U|e \right]_{i}=1-\frac{h_i(A)}{|a_{ii}|}.\]
Since $r_1(C)=\frac{r_1(A)}{|a_{11}|}=\frac{h_1(A)}{|a_{11}|}$, we
have
\begin{eqnarray*}|| C(\mu)^{-1}||_\infty&\leq & \max\left\{\frac{1}{\mu - r_1(C)}, \max\limits_{i\neq 1}
\frac{1}{|c_{ii}|- r_i(C)}\right\}\\
&=& \max\left\{\frac{1}{\mu - \frac{h_1(A)}{|a_{11}|}},
\max\limits_{i\neq 1}\frac{1}{
1-\frac{h_i(A)}{|a_{ii}|}}\right\}\\
&=&\max\left\{\frac{1}{\mu - \frac{h_1(A)}{|a_{11}|}}, \frac{1}{
1-\max\limits_{i\neq 1}\frac{h_i(A)}{|a_{ii}|}}\right\}.
\end{eqnarray*}

We prove easily  that (\ref{eq2.10}) holds in an analogous way. The
proof is completed.
\end{proof}

The main result of this paper is the following theorem:

\begin{thm}\label{th 2.1}
Let $A=[a_{ij}] \in C^{n,n}$ be a Nekrasov matrix. Then for
$\mu>\frac{r_1(A)}{|a_{11}|}$,
\begin{equation} \label{m1}||A^{-1}||_\infty \leq \max\{ \mu,1\}
\max\limits_{i\in N} \frac{z_i(A)}{|a_{ii}|}\max\left\{\frac{1}{\mu
- \frac{h_1(A)}{|a_{11}|}}, \frac{1}{ 1-\max\limits_{i\neq
1}\frac{h_i(A)}{|a_{ii}|}}\right\},\end{equation} and
\begin{equation}\label{m10} ||A^{-1}||_\infty \leq
 \frac{ \max\{ \mu,1\} \max\limits_{i\in n}
z_i(A) }{\min\left\{\mu|a_{11}| - h_1(A), \min\limits_{i\neq
1}(|a_{ii}|-h_i(A))\right\}}.\end{equation}
\end{thm}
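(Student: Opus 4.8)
The plan is to reduce the estimate to the comparison matrix and then exploit the triangular factorization that already underlies the definitions of $C$, $B$, $C(\mu)$ and $B(\mu)$. Since a Nekrasov matrix is a nonsingular $H$-matrix, Lemma \ref{le2.0} gives $|A^{-1}|\le \langle A\rangle^{-1}$ entrywise; as $A$ is an $H$-matrix we also have $\langle A\rangle^{-1}\ge 0$. Multiplying the entrywise inequality by the all-ones vector $e$ and using that both sides are nonnegative, I obtain
\[ \|A^{-1}\|_\infty=\bigl\|\,|A^{-1}|\,e\,\bigr\|_\infty\le \bigl\|\langle A\rangle^{-1}e\bigr\|_\infty, \]
so it suffices to estimate $\|\langle A\rangle^{-1}e\|_\infty$.

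First I would record the factorization $\langle A\rangle=|D|-|L|-|U|=(|D|-|L|)\bigl(E-(|D|-|L|)^{-1}|U|\bigr)=(|D|-|L|)C$, whence $\langle A\rangle^{-1}=C^{-1}(|D|-|L|)^{-1}$. The step carrying the real content is identifying the vector $w:=(|D|-|L|)^{-1}e$. Solving the lower-triangular system $(|D|-|L|)w=e$ row by row yields $|a_{ii}|\,w_i=1+\sum_{j<i}|a_{ij}|\,w_j$, and an induction on $i$ shows that $w_i=z_i(A)/|a_{ii}|$, exactly the recursion defining $z_i(A)$. Thus $w\ge 0$, $\|w\|_\infty=\max_{i\in N} z_i(A)/|a_{ii}|$, and $\langle A\rangle^{-1}e=C^{-1}w$.

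Now I insert the parameter. From $C(\mu)=CD(\mu)$ we get $C^{-1}=D(\mu)C(\mu)^{-1}$, so $\langle A\rangle^{-1}e=D(\mu)C(\mu)^{-1}w$. Using compatibility of the induced infinity norm with the vector infinity norm together with $\|D(\mu)x\|_\infty\le\max\{\mu,1\}\|x\|_\infty$, I get
\[ \|A^{-1}\|_\infty\le\max\{\mu,1\}\,\|C(\mu)^{-1}\|_\infty\,\|w\|_\infty. \]
Substituting $\|w\|_\infty=\max_{i\in N}z_i(A)/|a_{ii}|$ and the bound (\ref{eq2.1}) for $\|C(\mu)^{-1}\|_\infty$ from Lemma \ref{le 2.3} (which applies precisely because $\mu>r_1(A)/|a_{11}|$) gives (\ref{m1}). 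For (\ref{m10}) I would run the same argument through $B$: since $B=|D|C$ we have $C^{-1}=B^{-1}|D|$, so $\langle A\rangle^{-1}e=B^{-1}(|D|w)=B^{-1}\tilde w$ with $\tilde w=(z_i(A))_{i\in N}$ and $\|\tilde w\|_\infty=\max_{i\in N}z_i(A)$; then $B^{-1}=D(\mu)B(\mu)^{-1}$ and the bound (\ref{eq2.10}) close the estimate.

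I expect the only genuine obstacle to be the identification $(|D|-|L|)^{-1}e=(z_i(A)/|a_{ii}|)_{i\in N}$, i.e. recognizing the numbers $z_i(A)$ as the components of the solution of $(|D|-|L|)w=e$; the factorization and the norm bookkeeping are routine. The remaining care is to pull the parameterized diagonal out with $\|D(\mu)\|_\infty=\max\{\mu,1\}$ rather than mis-absorbing it, and to note that $w$ and $\tilde w$ are nonnegative so that passing from $|A^{-1}|e$ to $\langle A\rangle^{-1}e$ preserves the inequality between max-norms.
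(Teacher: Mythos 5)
Your proposal is correct and follows essentially the same route as the paper: the factorization $\langle A\rangle=(|D|-|L|)C(\mu)D(\mu)^{-1}$, Lemma \ref{le2.0}, the bounds of Lemma \ref{le 2.3}, and the identification of $(|D|-|L|)^{-1}e$ with $(z_i(A)/|a_{ii}|)_{i\in N}$ are exactly the paper's ingredients. The only (harmless) cosmetic difference is that you carry the vector $e$ through the estimate and bound $\|\langle A\rangle^{-1}e\|_\infty$ directly, whereas the paper bounds $\|\langle A\rangle^{-1}\|_\infty$ by the product of the three matrix norms and then evaluates $\|(|D|-|L|)^{-1}\|_\infty$ as $\|y\|_\infty$ using the nonnegativity of that inverse.
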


\begin{proof}We only prove that (\ref{m1}) holds, and in an analogous way, (\ref{m10}) is proved easily.
 Let $C(\mu)=CD(\mu)=\left(E-(|D|-|L|)^{-1}|U|\right)D(\mu)$,
where $D(\mu)=diag(\mu,1,\cdots,1)$. From (\ref{eq2.0}), we have
\[C(\mu)=\left(E-(|D|-|L|)^{-1}|U|\right)D(\mu)=(|D|-|L|)^{-1}<A>D(\mu),\]
which implies that
\[<A>=(|D|-|L|)C(\mu) D(\mu)^{-1}.\]
Furthermore, since a Nekrasov matrix is an $H$-matrix, we have from
Lemma \ref{le2.0},
\begin{equation}\label{eq2.2}|| A^{-1}||_\infty\leq || <A>^{-1}||_\infty \leq ||D(\mu) ||_\infty||C(\mu)^{-1} ||_\infty
|| (|D|-|L|)^{-1}||_\infty.\end{equation} Note that $|D|-|L| $ is an
$M$-matrix, and then similar to the proof of Theorem 2 in
\cite{Cv3}, we can easily obtain
\begin{equation}\label{eq2.3} ||(|D|-|L|)^{-1}||_\infty= ||y ||_\infty=\max\limits_{i\in n}
\frac{z_i(A)}{|a_{ii}|}, \end{equation} where $y=
(|D|-|L|)^{-1}e=[y_1,y_2,\ldots,y_n]^T$ and $z_i(A)=|a_{ii}|y_i$,
i.e.,
\[ z_1(A)=1, ~and ~z_i(A)=\sum\limits_{j=1}^{i-1} \frac{|a_{ij}|}{|a_{jj}|}z_j(A)+1, i=2,\ldots,n.\]
From (\ref{eq2.1}), (\ref{eq2.2}), (\ref{eq2.3}) and the fact that
$||D(\mu) ||_\infty= \max\{ \mu,1\}$, we have
\[||A^{-1}||_\infty \leq \max\{ \mu,1\}
\max\limits_{i\in N} \frac{z_i(A)}{|a_{ii}|}\max\left\{\frac{1}{\mu
- \frac{h_1(A)}{|a_{11}|}}, \frac{1}{ 1-\max\limits_{i\neq
1}\frac{h_i(A)}{|a_{ii}|}}\right\}.\] The conclusions follows.
\end{proof}

\begin{example} \label{ex1} Consider the Nekrasov matrix $A_1$ in \cite{Cv3}, where \[A_1 =
\left[ \begin{array}{cccc}
  -7  &1   &-0.2 &2 \\
 7   &88    &2 &-3   \\
 2   &0.5    &13 &-2\\
 0.5 & 3.0 &1  & 6
\end{array} \right].\]
By computation, $h_1(A)= 3.2000,~h_2(A)= 8.2000,~h_3(A)=
2.9609,~h_4(A)= 0.7359,$ $ z_1(A)=1,$ $z_2(A)=2,$ $z_3(A)=1.2971$
and $z_4(A)=1.2394$. By Theorem \ref{th1.2} (Theorem 2 in
\cite{Cv3}), we have
\[||A_1^{-1}||_\infty \leq  0.3805, ~(The~ bound ~(\ref{eq1.1}) ~of~Theorem ~\ref{th1.2})\]
and \[||A_1^{-1}||_\infty \leq 0.5263. ~(The~ bound ~(\ref{eq1.10})
~of~Theorem ~\ref{th1.2})\] By the bound (\ref{m1}) of Theorem
\ref{th 2.1}, we have
\begin{eqnarray*}||A_1^{-1}||_\infty \leq & 4.8198 & ~(Taking ~\mu=0.5),\\
||A_1^{-1}||_\infty \leq &  0.6025 & ~(Taking
~\mu=0.8),\\||A_1^{-1}||_\infty \leq &  0.3535 & ~(Taking ~\mu=1.1),
\\||A_1^{-1}||_\infty \leq &  0.3745 & ~(Taking
~\mu=1.4),\\||A_1^{-1}||_\infty \leq &  0.4547 & ~(Taking
~\mu=1.7),\end{eqnarray*}
 and by the bound (\ref{m10}) of Theorem
\ref{th 2.1}, we have
\begin{eqnarray*}||A_1^{-1}||_\infty \leq &  2.0000 & ~(Taking ~\mu=0.6),\\
||A_1^{-1}||_\infty \leq &  0.6452 & ~(Taking
~\mu=0.9),\\||A_1^{-1}||_\infty \leq &   0.4615 & ~(Taking
~\mu=1.2),
\\||A_1^{-1}||_\infty \leq &   0.5699 & ~(Taking
~\mu=1.5),\\||A_1^{-1}||_\infty \leq &  0.6839 & ~(Taking
~\mu=1.8).\end{eqnarray*} In fact, $||A_1^{-1}||_\infty= 0.1921,$

\begin{figure}[bpt]
\centering
\begin{minipage}[b]{0.5\textwidth}
\centering
\includegraphics[width=2.4in]{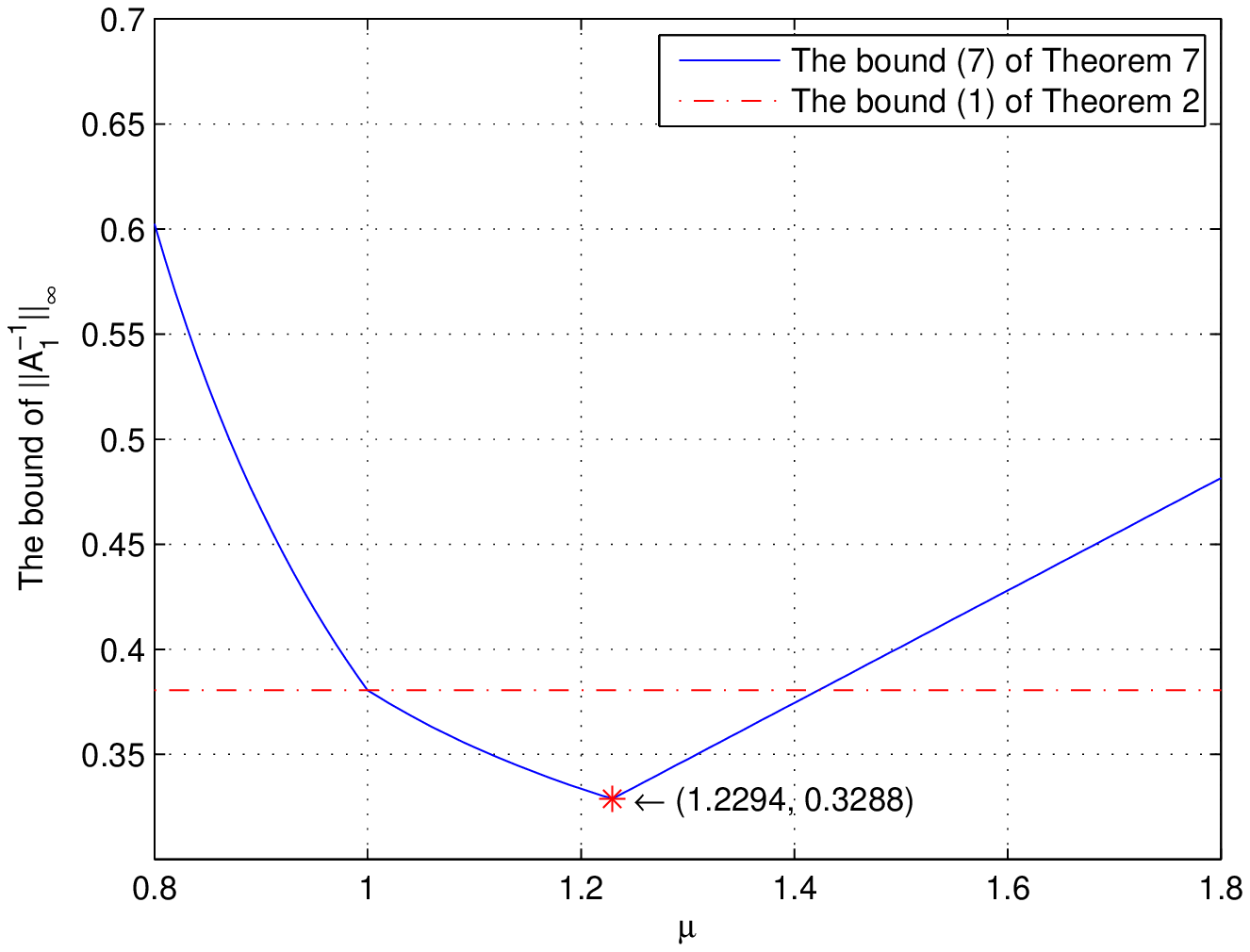}
\caption{The bounds (\ref{eq1.1}) and (\ref{m1})}
\end{minipage}%
\begin{minipage}[b]{0.5\textwidth}
\centering
\includegraphics[width=2.4in]{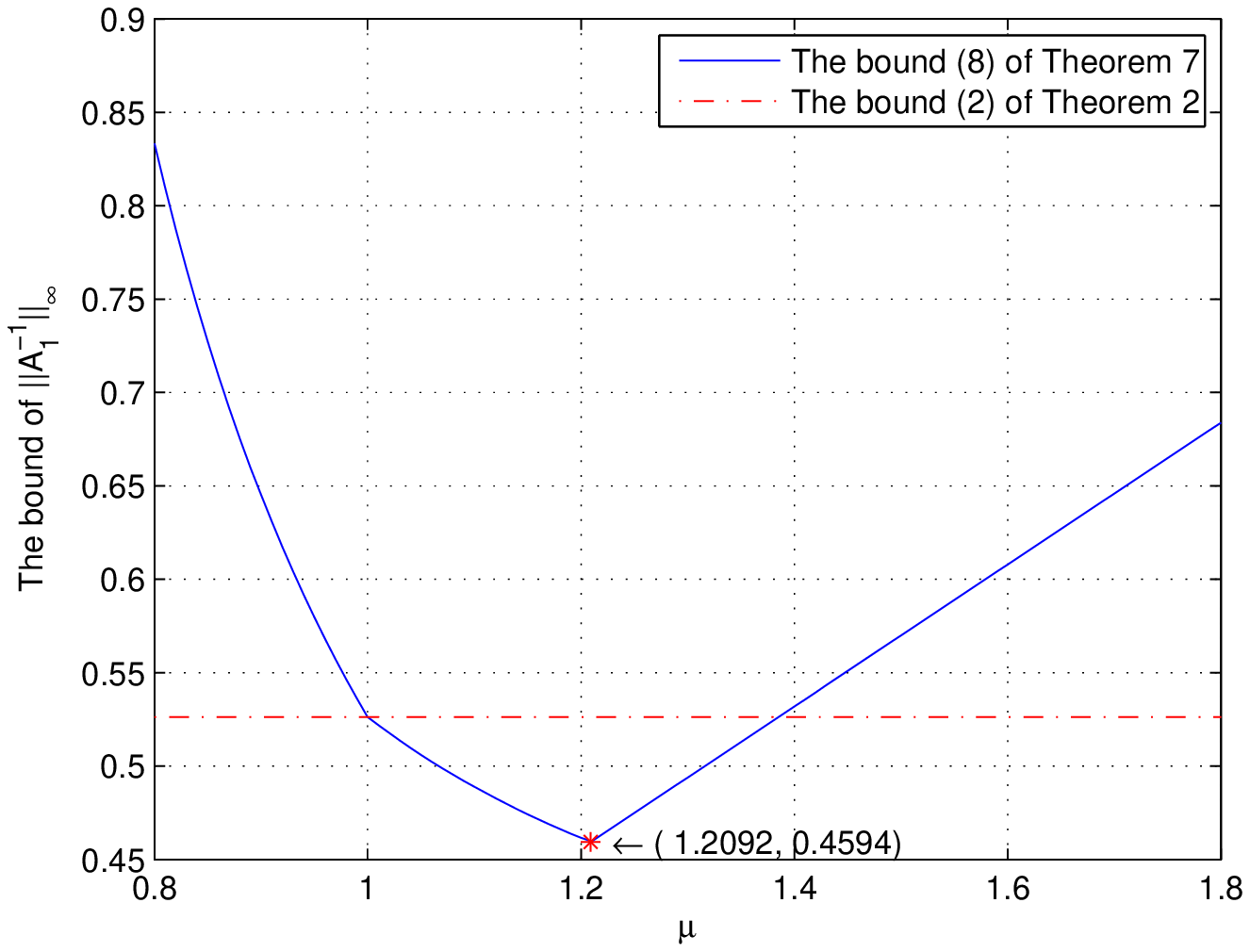}
\caption{The bounds (\ref{eq1.10}) and (\ref{m10})}
\end{minipage}
\end{figure}

\end{example}

\begin{rmk} Example \ref{ex1} shows that by choosing the value of
$\mu$, the bound (\ref{m1}) ((\ref{m10}), resp.) of Theorem \ref{th
2.1} is better than the bound (\ref{eq1.1}) ((\ref{eq1.10}), resp.)
of Theorem \ref{th1.2} in some cases. We further observe the bounds
in Theorem \ref{th 2.1} by Figures 1 and 2, and find that there is
an interval such that for any $\mu$ in this interval, the bound
(\ref{m1}) ((\ref{m10}), resp.) of Theorem \ref{th 2.1} for the
matrix $A_1$ is always smaller than the bound (\ref{eq1.1})
((\ref{eq1.10}), resp.) of Theorem \ref{th1.2}. An interesting
problem arises: whether there is an interval of $\mu$ such that the
bound (\ref{m1}) ((\ref{m10}), resp.) of Theorem \ref{th 2.1} for
any Nekrasov matrix is smaller than the bound (\ref{eq1.1})
((\ref{eq1.10}), resp.) of Theorem \ref{th1.2}? In the following
section, we will study this problem.
\end{rmk}

\section{The choice of $\mu$} In this section, we determine the value of $\mu$ such that our
bounds for $||A^{-1}||_\infty$ are less or equal to those of
\cite{Cv3}.
\subsection{\textbf{the optimal value of  $\mu$ for the bound (\ref{m1})}}
First, we consider the Nekrasov matrix $A=[a_{ij}] \in C^{n,n}$ with
\begin{equation} \label{eq2.4}\frac{h_1(A)}{|a_{11}|}>
\max\limits_{i\neq 1} \frac{h_i(A)}{|a_{ii}|},\end{equation} and
give the following lemma.

\begin{lem}\label{le 2.4}
Let $A=[a_{ij}] \in C^{n,n}$ be a Nekrasov matrix with \[
\frac{h_1(A)}{|a_{11}|}> \max\limits_{i\neq 1}
\frac{h_i(A)}{|a_{ii}|}.\] Then
\begin{equation} \label{eq2.6} 1<1+\frac{h_1(A)}{|a_{11}|}-\max\limits_{i\neq 1}
\frac{h_i(A)}{|a_{ii}|}<\frac{1-\max\limits_{i\neq
1}\frac{h_i(A)}{|a_{ii}|}}{1 - \frac{h_1(A)}{|a_{11}|}
}.\end{equation}
\end{lem}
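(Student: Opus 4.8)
The plan is to lighten the notation by writing $p=\frac{h_1(A)}{|a_{11}|}$ and $q=\max_{i\neq 1}\frac{h_i(A)}{|a_{ii}|}$, so that the chain to be established becomes simply $1<1+p-q<\frac{1-q}{1-p}$ and the hypothesis reads $p>q$. Before addressing the inequalities I would first record the two structural facts that the Nekrasov property delivers. Since $A$ is Nekrasov we have $|a_{ii}|>h_i(A)\geq 0$ for every $i\in N$, so each ratio $\frac{h_i(A)}{|a_{ii}|}$ lies in $[0,1)$; consequently $0\leq q<p<1$. In particular $p>0$ and, most importantly, $1-p>0$. The positivity of $1-p$ is the fact that will allow me to clear the denominator on the right-hand side without reversing the inequality.

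With this in hand the left inequality $1<1+p-q$ is immediate, since it is equivalent to $0<p-q$, which is exactly the hypothesis. The right inequality is the only part that requires a computation. Because $1-p>0$, multiplying through by $1-p$ preserves the direction, so the claim $1+p-q<\frac{1-q}{1-p}$ is equivalent to $(1+p-q)(1-p)<1-q$. Expanding the left-hand side gives $1-p^2-q+pq$, and subtracting this from $1-q$ reduces the inequality to $-p^2+pq<0$, that is $p(q-p)<0$. This final statement holds because $p>0$ while $q-p<0$, both of which follow from the hypothesis $p>q$ together with the bound $p>0$ established in the first step.

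I do not expect a genuine obstacle here; the entire argument is a short algebraic reduction. The single point that needs care is the justification that $1-p>0$, equivalently $h_1(A)<|a_{11}|$, so that multiplying across the right inequality is legitimate — and this is precisely the defining inequality of a Nekrasov matrix specialized to the first row (where $h_1(A)=r_1(A)$). Once that sign is secured, both inequalities fall out immediately, so I would simply translate the above reduction back into the $h_i$/$|a_{ii}|$ notation to conclude.
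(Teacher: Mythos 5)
Your proposal is correct and is essentially the paper's own argument read in reverse: the paper starts from $\frac{h_1(A)}{|a_{11}|}\max_{i\neq 1}\frac{h_i(A)}{|a_{ii}|}-\bigl(\frac{h_1(A)}{|a_{11}|}\bigr)^2<0$ (your $p(q-p)<0$) and manipulates it into $(1+p-q)(1-p)<1-q$ before dividing by $1-p>0$, whereas you clear the denominator first and reduce to the same inequality $p(q-p)<0$. Your explicit justification that $p>0$ and $1-p>0$ via the Nekrasov condition matches what the paper uses implicitly, so there is nothing further to add.
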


\begin{proof}
Obviously,  the first Inequality in (\ref{eq2.6}) holds. We only
prove that the second holds. From Inequality (\ref{eq2.4}), we have
that
\[ \frac{h_1(A)}{|a_{11}|}\max\limits_{i\neq 1}
\frac{h_i(A)}{|a_{ii}|}- \left(\frac{r_1(A)}{|a_{11}|}\right)^2<0.\]
Equivalently,
\[1-\max\limits_{i\neq 1}
\frac{h_i(A)}{|a_{ii}|}+\frac{h_1(A)}{|a_{11}|}-\frac{h_1(A)}
{|a_{11}|}+\frac{h_1(A)}{|a_{11}|}\max\limits_{i\neq 1}
\frac{h_i(A)}{|a_{ii}|}-
\left(\frac{h_1(A)}{|a_{11}|}\right)^2<1-\max\limits_{i\neq 1}
\frac{h_i(A)}{|a_{ii}|},\] i.e.,
 \[ \left(1-\max\limits_{i\neq 1}
\frac{h_i(A)}{|a_{ii}|}+\frac{h_1(A)}{|a_{11}|} \right) \left(
1-\frac{h_1(A)}{|a_{11}|}\right)< 1-\max\limits_{i\neq 1}
\frac{h_i(A)}{|a_{ii}|}.\] Note that $1-\frac{h_1(A)}{|a_{11}|}>0 $,
then
\[ 1-\max\limits_{i\neq 1}
\frac{h_i(A)}{|a_{ii}|}+\frac{h_1(A)}{|a_{11}|}<\frac{1-\max\limits_{i\neq
1}\frac{h_i(A)}{|a_{ii}|}}{1 - \frac{h_1(A)}{|a_{11}|} }.\] The
conclusion follows. \end{proof}

We now give an interval of $\mu$ such that the bound (\ref{m1}) of
Theorem \ref{th 2.1} is less than the bound (\ref{eq1.1}) of Theorem
\ref{th1.2}.

\begin{lem}\label{le 2.5}
Let $A=[a_{ij}] \in C^{n,n}$ be a Nekrasov matrix with
\[\frac{h_1(A)}{|a_{11}|}> \max\limits_{i\neq 1}
\frac{h_i(A)}{|a_{ii}|}.\]Then for each $\mu \in \left(1,
\frac{1-\max\limits_{i\neq 1}\frac{h_i(A)}{|a_{ii}|}}{1 -
\frac{h_1(A)}{|a_{11}|} }\right)$,
\begin{eqnarray} \label{eq2.5}||A^{-1}||_\infty &\leq& \max\{ \mu,1\}
\max\limits_{i\in N} \frac{z_i(A)}{|a_{ii}|}\max\left\{\frac{1}{\mu
- \frac{h_1(A)}{|a_{11}|}}, \frac{1}{ 1-\max\limits_{i\neq
1}\frac{h_i(A)}{|a_{ii}|}}\right\}\nonumber
\\& <&\frac{\max\limits_{i\in
N}\frac{z_i(A)}{|a_{ii}|}}{ 1-\max\limits_{i\in
N}\frac{h_i(A)}{|a_{ii}|}}.\nonumber\end{eqnarray}
\end{lem}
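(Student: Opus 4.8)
The plan is to notice that the first inequality in (\ref{eq2.5}) is nothing other than the bound (\ref{m1}) of Theorem \ref{th 2.1}, which is available here because $\mu>1>\frac{h_1(A)}{|a_{11}|}=\frac{r_1(A)}{|a_{11}|}$; so all that genuinely needs proof is the second, strict, inequality comparing our bound with (\ref{eq1.1}). To streamline the algebra I would abbreviate $a=\frac{h_1(A)}{|a_{11}|}$, $b=\max_{i\neq 1}\frac{h_i(A)}{|a_{ii}|}$ and $Z=\max_{i\in N}\frac{z_i(A)}{|a_{ii}|}$, noting $Z>0$ since each $z_i(A)\ge 1$. Because $A$ is a Nekrasov matrix, $a,b\in[0,1)$, and hypothesis (\ref{eq2.4}) is exactly $a>b$, so that $\max_{i\in N}\frac{h_i(A)}{|a_{ii}|}=a$ and the right-hand side of (\ref{eq2.5}) is $\frac{Z}{1-a}$. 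Since $\mu>1$ forces $\max\{\mu,1\}=\mu$, cancelling the positive factor $Z$ reduces the desired strict inequality to the scalar statement
\[\mu\,\max\left\{\frac{1}{\mu-a},\,\frac{1}{1-b}\right\}<\frac{1}{1-a}.\]

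Next I would decouple this maximum: the displayed inequality holds if and only if both $\frac{\mu}{\mu-a}<\frac{1}{1-a}$ and $\frac{\mu}{1-b}<\frac{1}{1-a}$ hold. All denominators are positive on the interval, since $\mu>1>a$ gives $\mu-a>0$ while $a,b<1$. For the first, cross-multiplying by $(\mu-a)(1-a)>0$ turns it into $\mu(1-a)<\mu-a$, i.e. $a<\mu a$, i.e. $\mu>1$ (here one uses $a>0$, which follows from $a>b\ge 0$). For the second, cross-multiplying by $(1-b)(1-a)>0$ gives $\mu(1-a)<1-b$, i.e. $\mu<\frac{1-b}{1-a}$. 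Hence the first comparison is controlled by the left endpoint of the interval and the second by the right endpoint, and for every $\mu\in\left(1,\frac{1-b}{1-a}\right)$ both are strict, which establishes the claim.

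The only point needing external input is that the interval carry content, i.e. be nonempty; this is exactly what Lemma \ref{le 2.4} supplies, since it gives $\frac{1-b}{1-a}>1$ under (\ref{eq2.4}). I do not expect a real obstacle in this argument: once one sees that the $\max$ over the two reciprocals splits into two independent endpoint conditions, everything is sign-careful cross-multiplication, with the two strict inequalities coming respectively from $\mu>1$ (so $a<\mu a$ strictly, using $a>0$) and from the strict upper bound $\mu<\frac{1-b}{1-a}$. The mild care required is only in verifying positivity of denominators before cross-multiplying and in recording that (\ref{eq2.4}) simultaneously guarantees $a>0$ and $\max_{i\in N}\frac{h_i(A)}{|a_{ii}|}=a$.
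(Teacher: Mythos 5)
Your proposal is correct, and its logical organization differs from the paper's in a way worth noting. The paper partitions the interval $\left(1,\frac{1-b}{1-a}\right)$ at the crossover point $1+a-b$ (whose membership in the interval is exactly what Lemma \ref{le 2.4} certifies), identifies on each subinterval which term of the maximum is active, and then argues by monotonicity: $f(\mu)=\frac{\mu}{\mu-a}$ is decreasing so $f(\mu)<f(1)$ on the left piece, and $g(\mu)=\frac{\mu}{1-b}$ is increasing so $g(\mu)<g\left(\frac{1-b}{1-a}\right)$ on the right piece. You instead decouple the maximum, using that for $\mu>0$ the inequality $\mu\max\{X,Y\}<Z$ is equivalent to $\mu X<Z$ and $\mu Y<Z$, and verify each scalar inequality on the whole interval by sign-careful cross-multiplication; the two conditions turn out to be precisely $\mu>1$ and $\mu<\frac{1-b}{1-a}$. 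Your route is shorter, avoids the case split, needs Lemma \ref{le 2.4} only to know the interval is nonempty, and makes transparent why the two endpoints of the admissible interval are what they are (each endpoint is the sharp threshold for one of the two terms of the max). You are also right to flag that strictness of $\frac{\mu}{\mu-a}<\frac{1}{1-a}$ requires $a>0$, which follows from $a>b\ge 0$ under hypothesis (\ref{eq2.4}); the paper glosses over this point. What the paper's organization buys in exchange is that the monotone-decreasing/monotone-increasing structure on the two subintervals immediately identifies $\mu=1+a-b$ as the minimizer of the bound, which is the substance of Theorem \ref{th2.2}; your decoupled argument proves the present lemma but would need the crossover analysis reinstated to recover that optimality statement.
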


\begin{proof} From Lemma \ref{le 2.4}, we have
\[\mu \in\left(1,1+\frac{h_1(A)}{|a_{11}|}-\max\limits_{i\neq 1}
\frac{h_i(A)}{|a_{ii}|}\right]\bigcup
\left[1+\frac{h_1(A)}{|a_{11}|}-\max\limits_{i\neq 1}
\frac{h_i(A)}{|a_{ii}|}, \frac{1-\max\limits_{i\neq
1}\frac{h_i(A)}{|a_{ii}|}}{1 - \frac{h_1(A)}{|a_{11}|} }\right).\]
and $ \max\{ \mu,1\}=\mu$.

(I) For $ \mu \in
\left(1,1+\frac{h_1(A)}{|a_{11}|}-\max\limits_{i\neq 1}
\frac{h_i(A)}{|a_{ii}|}\right],$ then \[ \mu
-\frac{h_1(A)}{|a_{11}|}\leq 1-\max\limits_{i\neq 1}
\frac{h_i(A)}{|a_{ii}|},\] that is, \[ \frac{1}{\mu
-\frac{h_1(A)}{|a_{11}|}}\geq \frac{1}{1-\max\limits_{i\neq 1}
\frac{h_i(A)}{|a_{ii}|} }.\] Therefore, \[ \max\{ \mu,1\}
\max\left\{\frac{1}{\mu - \frac{h_1(A)}{|a_{11}|}}, \frac{1}{
1-\max\limits_{i\neq 1}\frac{h_i(A)}{|a_{ii}|}}\right\}= \frac{\mu}{
\mu - \frac{h_1(A)}{|a_{11}|}}.\] Consider the function
$f(x)=\frac{x}{x-\frac{h_1(A)}{|a_{11}|}}, ~x\in
\left[1,1+\frac{h_1(A)}{|a_{11}|}-\max\limits_{i\neq 1}
\frac{h_i(A)}{|a_{ii}|}\right]$. It is easy from
$\frac{h_1(A)}{|a_{11}|}<1$ to prove that $f(x)$ is a monotonically
decreasing function of $x$. Hence, for any $ \mu \in
\left(1,1+\frac{h_1(A)}{|a_{11}|}-\max\limits_{i\neq 1}
\frac{h_i(A)}{|a_{ii}|}\right]$,
\[f(\mu)< f(1),\]
i.e., \[ \frac{\mu}{ \mu - \frac{h_1(A)}{|a_{11}|}}< \frac{1}{ 1 -
\frac{h_1(A)}{|a_{11}|}}=\frac{1}{1-\max\limits_{i\in
N}\frac{h_i(A)}{|a_{ii}|}},\] which implies that
\[\frac{\mu\max\limits_{i\in N}\frac{z_i(A)}{|a_{ii}|}}{ \mu - \frac{h_1(A)}{|a_{11}|}}<
\frac{\max\limits_{i\in
N}\frac{z_i(A)}{|a_{ii}|}}{1-\max\limits_{i\in
N}\frac{h_i(A)}{|a_{ii}|}}. \] Hence,
 \[\max\{ \mu,1\}
\max\limits_{i\in N} \frac{z_i(A)}{|a_{ii}|}\max\left\{\frac{1}{\mu
- \frac{h_1(A)}{|a_{11}|}}, \frac{1}{ 1-\max\limits_{i\neq
1}\frac{h_i(A)}{|a_{ii}|}}\right\}\nonumber <\frac{\max\limits_{i\in
N}\frac{z_i(A)}{|a_{ii}|}}{ 1-\max\limits_{i\in
N}\frac{h_i(A)}{|a_{ii}|}}.\]

(II) For $ \mu \in
\left[1+\frac{h_1(A)}{|a_{11}|}-\max\limits_{i\neq 1}
\frac{h_i(A)}{|a_{ii}|}, \frac{1-\max\limits_{i\neq
1}\frac{h_i(A)}{|a_{ii}|}}{1 - \frac{h_1(A)}{|a_{11}|} }\right),$
then
\[ \mu
-\frac{h_1(A)}{|a_{11}|}\geq 1-\max\limits_{i\neq 1}
\frac{h_i(A)}{|a_{ii}|},\] that is, \[ \frac{1}{\mu
-\frac{h_1(A)}{|a_{11}|}}\leq \frac{1}{1-\max\limits_{i\neq 1}
\frac{h_i(A)}{|a_{ii}|} }.\] Therefore, \[ \max\{ \mu,1\}
\max\left\{\frac{1}{\mu - \frac{h_1(A)}{|a_{11}|}}, \frac{1}{
1-\max\limits_{i\neq 1}\frac{h_i(A)}{|a_{ii}|}}\right\}=
\frac{\mu}{1-\max\limits_{i\neq 1} \frac{h_i(A)}{|a_{ii}|} }.\]
Consider the function $g(x)=\frac{x}{1-\max\limits_{i\neq 1}
\frac{h_i(A)}{|a_{ii}|}}, ~x\in
\left[1+\frac{h_1(A)}{|a_{11}|}-\max\limits_{i\neq 1}
\frac{h_i(A)}{|a_{ii}|}, \frac{1-\max\limits_{i\neq
1}\frac{h_i(A)}{|a_{ii}|}}{1 - \frac{h_1(A)}{|a_{11}|} }\right]$.
Obviously, $g(x)$ is a monotonically increasing function of $x$.
Hence, for any $ \mu \in
\left[1+\frac{h_1(A)}{|a_{11}|}-\max\limits_{i\neq 1}
\frac{h_i(A)}{|a_{ii}|}, \frac{1-\max\limits_{i\neq
1}\frac{h_i(A)}{|a_{ii}|}}{1 - \frac{h_1(A)}{|a_{11}|} } \right)$,
\[g(\mu)< g\left(  \frac{1-\max\limits_{i\neq
1}\frac{h_i(A)}{|a_{ii}|}}{1 - \frac{h_1(A)}{|a_{11}|} }\right),\]
that is,
\[ \frac{\mu}{1-\max\limits_{i\neq 1} \frac{h_i(A)}{|a_{ii}|}}
<\frac{1}{1- \frac{h_1(A)}{|a_{11}|}} =\frac{1}{1-\max\limits_{i\in
N}\frac{h_i(A)}{|a_{ii}|}}, \] which implies that
\[ \frac{\mu\max\limits_{i\in N}\frac{z_i(A)}{|a_{ii}|}}{ 1-\max\limits_{i\neq
1}\frac{h_i(A)}{|a_{ii}|}} <\frac{\max\limits_{i\in
N}\frac{z_i(A)}{|a_{ii}|}}{ 1-\max\limits_{i\in
N}\frac{h_i(A)}{|a_{ii}|}}.\]Hence,
 \[\max\{ \mu,1\}
\max\limits_{i\in N} \frac{z_i(A)}{|a_{ii}|}\max\left\{\frac{1}{\mu
- \frac{h_1(A)}{|a_{11}|}}, \frac{1}{ 1-\max\limits_{i\neq
1}\frac{h_i(A)}{|a_{ii}|}}\right\}\nonumber <\frac{\max\limits_{i\in
N}\frac{z_i(A)}{|a_{ii}|}}{ 1-\max\limits_{i\in
N}\frac{h_i(A)}{|a_{ii}|}}.\] The conclusion follows from (I) and
(II). \end{proof}

Lemma \ref{le 2.5} provides an interval of $\mu$ such that the bound
(\ref{m1}) in Theorem \ref{th 2.1} is better than the bound
(\ref{eq1.1}) in Theorem \ref{th1.2}. Moreover, we can determine the
optimal value of $\mu$ by the following theorem.

\begin{thm}\label{th2.2}
Let $A=[a_{ij}] \in C^{n,n}$ be a Nekrasov matrix with
\[\frac{h_1(A)}{|a_{11}|}> \max\limits_{i\neq 1}
\frac{h_i(A)}{|a_{ii}|}.\] Then
\begin{eqnarray} \label{eq2.7} &\min\left\{ \max\{ \mu,1\}
\max\left\{\frac{1}{\mu - \frac{h_1(A)}{|a_{11}|}}, \frac{1}{
1-\max\limits_{i\neq 1}\frac{h_i(A)}{|a_{ii}|}}\right\}: \mu \in
\left(1, \frac{1-\max\limits_{i\neq 1}\frac{h_i(A)}{|a_{ii}|}}{1 -
\frac{h_1(A)}{|a_{11}|} }\right) \right\}\nonumber&
\\&= \frac{1+\frac{h_1(A)}{|a_{11}|}-\max\limits_{i\neq 1}
\frac{h_i(A)}{|a_{ii}|}}{ 1-\max\limits_{i\neq
1}\frac{h_i(A)}{|a_{ii}|}}.&\end{eqnarray} Furthermore,
\begin{equation} \label{eq2.8}||A^{-1}||_\infty \leq \frac{\max\limits_{i\in N}
\frac{z_i(A)}{|a_{ii}|}\left(1+\frac{h_1(A)}{|a_{11}|}-\max\limits_{i\neq
1} \frac{h_i(A)}{|a_{ii}|}\right)}{ 1-\max\limits_{i\neq
1}\frac{h_i(A)}{|a_{ii}|}}  <\frac{\max\limits_{i\in
N}\frac{z_i(A)}{|a_{ii}|}}{ 1-\max\limits_{i\in
N}\frac{h_i(A)}{|a_{ii}|}}.\end{equation}
\end{thm}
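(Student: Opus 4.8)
The plan is to reduce the minimization to the piecewise analysis already set up in the proof of Lemma \ref{le 2.5} and then read off the minimizer. Write $p=\frac{h_1(A)}{|a_{11}|}$ and $q=\max\limits_{i\neq 1}\frac{h_i(A)}{|a_{ii}|}$, so the hypothesis is $p>q$ and (since $A$ is Nekrasov) $0\le q<p<1$. Because every admissible $\mu$ satisfies $\mu>1$, we have $\max\{\mu,1\}=\mu$, so the quantity to be minimized is $F(\mu)=\mu\max\bigl\{\tfrac{1}{\mu-p},\tfrac{1}{1-q}\bigr\}$ on the interval $\bigl(1,\tfrac{1-q}{1-p}\bigr)$. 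First I would observe that $\tfrac{1}{\mu-p}\ge\tfrac{1}{1-q}$ is equivalent to $\mu\le 1+p-q$, so the crossover point of the inner maximum is exactly $\mu^{*}=1+p-q$, which is precisely the point splitting the interval into the two pieces (I) and (II) of Lemma \ref{le 2.5}.

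On the left piece $\mu\in\bigl(1,\mu^{*}\bigr]$ the objective equals $f(\mu)=\frac{\mu}{\mu-p}$, shown monotonically decreasing there in Lemma \ref{le 2.5}; on the right piece $\mu\in\bigl[\mu^{*},\tfrac{1-q}{1-p}\bigr)$ it equals $g(\mu)=\frac{\mu}{1-q}$, shown monotonically increasing. Hence on each side the infimum is approached toward the common endpoint $\mu^{*}$, and the two one-sided values agree:
\[
f(\mu^{*})=\frac{1+p-q}{(1+p-q)-p}=\frac{1+p-q}{1-q}=\frac{\mu^{*}}{1-q}=g(\mu^{*}).
\]
Since $\mu^{*}=1+p-q$ lies strictly inside $\bigl(1,\tfrac{1-q}{1-p}\bigr)$ — the left inequality $1<1+p-q$ is clear from $p>q$, and the right inequality is the first part of Lemma \ref{le 2.4} — the minimum is attained at the interior point $\mu^{*}$ with value $\frac{1+p-q}{1-q}$, which is the claim (\ref{eq2.7}).

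For (\ref{eq2.8}) I would substitute $\mu=\mu^{*}$ into the bound (\ref{m1}) of Theorem \ref{th 2.1}; by (\ref{eq2.7}) the bracketed factor collapses to $\frac{1+p-q}{1-q}$, yielding the middle expression of (\ref{eq2.8}). The remaining strict inequality against the bound (\ref{eq1.1}) of Theorem \ref{th1.2} is then immediate from Lemma \ref{le 2.4}: its second inequality $1+p-q<\frac{1-q}{1-p}$, multiplied through by $\frac{1-p}{1-q}>0$, gives $\frac{1+p-q}{1-q}<\frac{1}{1-p}$, and one uses $\max\limits_{i\in N}\frac{h_i(A)}{|a_{ii}|}=p$ (valid because $p>q$) to recognize $1-p$ as the denominator in (\ref{eq1.1}). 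Multiplying by the common positive factor $\max\limits_{i\in N}\frac{z_i(A)}{|a_{ii}|}$ finishes the proof.

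I do not expect a genuine obstacle here: the entire argument is a clean piecewise minimization that the earlier lemmas have already prepared. The only points requiring a little care are confirming that the two monotonicity regimes meet at $\mu^{*}$ so that the minimum is an interior value rather than a boundary infimum, and correctly invoking Lemma \ref{le 2.4} in its divided form for the final strict comparison.
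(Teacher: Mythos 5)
Your proposal is correct and follows essentially the same route as the paper: both reduce to the piecewise functions $f$ and $g$ from Lemma \ref{le 2.5}, observe the decreasing/increasing monotonicity on the two subintervals meeting at $\mu^{*}=1+\frac{h_1(A)}{|a_{11}|}-\max_{i\neq 1}\frac{h_i(A)}{|a_{ii}|}$, and evaluate there to get (\ref{eq2.7}), with the final strict inequality in (\ref{eq2.8}) coming from Lemma \ref{le 2.4} (the paper cites Lemma \ref{le 2.5} for this last step, but that lemma's proof rests on the same inequality you invoke directly). Your added care in verifying that $\mu^{*}$ is the exact crossover of the inner maximum and lies strictly inside the interval only makes explicit what the paper leaves implicit.
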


\begin{proof}
From the proof of Lemma \ref{le 2.5}, we have that
\[f(x)=\frac{x}{x-\frac{h_1(A)}{|a_{11}|}}, ~x\in
\left[1,1+\frac{h_1(A)}{|a_{11}|}-\max\limits_{i\neq 1}
\frac{h_i(A)}{|a_{ii}|}\right]\] is decreasing, and that
 \[g(x)=\frac{x}{1-\max\limits_{i\neq 1}
\frac{h_i(A)}{|a_{ii}|}}, ~x\in
\left[1+\frac{h_1(A)}{|a_{11}|}-\max\limits_{i\neq 1}
\frac{h_i(A)}{|a_{ii}|}, \frac{1-\max\limits_{i\neq
1}\frac{h_i(A)}{|a_{ii}|}}{1 - \frac{h_1(A)}{|a_{11}|} }\right]\] is
increasing. Therefore, the minimum of $ f(x)$, which is equal to
that of $g(x)$, is
\[ f\left(1+\frac{h_1(A)}{|a_{11}|}-\max\limits_{i\neq 1}
\frac{h_i(A)}{|a_{ii}|}\right)=g\left(1+\frac{h_1(A)}{|a_{11}|}-\max\limits_{i\neq
1}
\frac{h_i(A)}{|a_{ii}|}\right)=\frac{1+\frac{h_1(A)}{|a_{11}|}-\max\limits_{i\neq
1} \frac{h_i(A)}{|a_{ii}|}}{ 1-\max\limits_{i\neq
1}\frac{h_i(A)}{|a_{ii}|}},\] which implies that (\ref{eq2.7})
holds. Again by Lemma \ref{le 2.5}, (\ref{eq2.8}) follows easily.
\end{proof}

\begin{rmk} Theorem \ref{th2.2} provides a method to determine the optimal value of $\mu$
for a Nekrasov matrix $A=[a_{ij}] \in C^{n,n}$ with
\[\frac{h_1(A)}{|a_{11}|}> \max\limits_{i\neq 1}
\frac{h_i(A)}{|a_{ii}|}.\] Also consider the matrix $A_1$. By
computation, we get
\[ \frac{h_1(A_1)}{|a_{11}|}= 0.4571> 0.2278=\max\limits_{i\neq
1}\frac{h_{i}(A_1)}{|a_{ii}|}.\] Hence, by Theorem \ref{th2.2}, we
can obtain that the bound (\ref{m1}) in Theorem \ref{th 2.1} reaches
its minimum
\[ \frac{\max\limits_{i\in N}
\frac{z_i(A_1)}{|a_{ii}|}\left(1+\frac{h_1(A_1)}{|a_{11}|}-\max\limits_{i\neq
1} \frac{h_i(A_1)}{|a_{ii}|}\right)}{ 1-\max\limits_{i\neq
1}\frac{h_i(A_1)}{|a_{ii}|}} = 0.3288\] at
$\mu=1+\frac{h_1(A_1)}{|a_{11}|}-\max\limits_{i\neq 1}
\frac{h_i(A_1)}{|a_{ii}|}=1.2294$ (also see Figure 1).
\end{rmk}

Next, we study the bound in Theorem \ref{th 2.1} for the Nekrasov
matrix $A=[a_{ij}] \in C^{n,n}$ with
\[\frac{h_1(A)}{|a_{11}|}\leq \max\limits_{i\neq 1}
\frac{h_i(A)}{|a_{ii}|}.\]

\begin{thm} \label{th2.3}
Let $A=[a_{ij}] \in C^{n,n}$ be a Nekrasov matrix with
\[\frac{h_1(A)}{|a_{11}|}\leq \max\limits_{i\neq 1}
\frac{h_i(A)}{|a_{ii}|}.\] Then we can take $\mu
=1+\frac{h_1(A)}{|a_{11}|}-\max\limits_{i\neq 1}
\frac{h_i(A)}{|a_{ii}|}$ such that
\begin{eqnarray} ||A^{-1}||_\infty &\leq& \max\{ \mu,1\}
\max\limits_{i\in N} \frac{z_i(A)}{|a_{ii}|}\max\left\{\frac{1}{\mu
- \frac{h_1(A)}{|a_{11}|}}, \frac{1}{ 1-\max\limits_{i\neq
1}\frac{h_i(A)}{|a_{ii}|}}\right\}\nonumber
\\& =&\frac{\max\limits_{i\in
N}\frac{z_i(A)}{|a_{ii}|}}{ 1-\max\limits_{i\in
N}\frac{h_i(A)}{|a_{ii}|}}.\nonumber\end{eqnarray}
\end{thm}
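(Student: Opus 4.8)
The plan is to apply Theorem \ref{th 2.1} directly with the prescribed value of $\mu$ and then check that all three factors in the bound (\ref{m1}) collapse to exactly the claimed right-hand side. For brevity write $a = \frac{h_1(A)}{|a_{11}|}$ and $b = \max_{i\neq 1}\frac{h_i(A)}{|a_{ii}|}$. Since $A$ is a Nekrasov matrix, every diagonal ratio $\frac{h_i(A)}{|a_{ii}|}$ is strictly below $1$, so $a<1$ and $b<1$; the standing hypothesis of this theorem is $a \leq b$, and the chosen value is $\mu = 1 + a - b$.

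First I would verify that this $\mu$ is admissible for Theorem \ref{th 2.1}, i.e. that $\mu > \frac{r_1(A)}{|a_{11}|}$. Because $h_1(A) = r_1(A)$, the threshold equals $a$, and $\mu - a = 1 - b > 0$ since $b<1$; hence $\mu > a$ and the bound (\ref{m1}) is available with this $\mu$.

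Next I would simplify the three factors appearing in (\ref{m1}). From $a \leq b$ we get $\mu = 1 + a - b \leq 1$, so $\max\{\mu,1\} = 1$. The decisive observation is that at this value of $\mu$ the two arguments of the inner maximum coincide: indeed $\mu - a = 1 - b$, so $\frac{1}{\mu - a} = \frac{1}{1-b}$, which is exactly the second argument $\frac{1}{1-\max_{i\neq 1}\frac{h_i(A)}{|a_{ii}|}}$; therefore the inner maximum equals $\frac{1}{1-b}$. Finally, because $a \leq b$, the maximum over all of $N$ obeys $\max_{i\in N}\frac{h_i(A)}{|a_{ii}|} = \max\{a,b\} = b$, so $\frac{1}{1-b} = \frac{1}{1-\max_{i\in N}\frac{h_i(A)}{|a_{ii}|}}$.

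Combining these, the right-hand side of (\ref{m1}) becomes $\max_{i\in N}\frac{z_i(A)}{|a_{ii}|}\cdot\frac{1}{1-\max_{i\in N}\frac{h_i(A)}{|a_{ii}|}}$, which is precisely the bound (\ref{eq1.1}) of Theorem \ref{th1.2}; the inequality $\|A^{-1}\|_\infty$ against this quantity is inherited from Theorem \ref{th 2.1}, giving the asserted equality. There is essentially no analytic obstacle here, as everything reduces to elementary substitution. The only point requiring a moment's care is recognizing that the hypothesis $a \leq b$ is exactly what simultaneously forces the coincidence of the two inner terms and the identity $\max_{i\in N} = \max_{i\neq 1}$ for the ratios $\frac{h_i(A)}{|a_{ii}|}$; this is the complementary case to Theorem \ref{th2.2}, which explains why the new bound here exactly \emph{matches}, rather than strictly improves, the bound (\ref{eq1.1}).
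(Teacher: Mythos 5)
Your proposal is correct and follows essentially the same route as the paper's own proof: set $\mu=1+\frac{h_1(A)}{|a_{11}|}-\max_{i\neq 1}\frac{h_i(A)}{|a_{ii}|}\leq 1$ so that $\max\{\mu,1\}=1$, observe that the two arguments of the inner maximum coincide and equal $\frac{1}{1-\max_{i\in N}\frac{h_i(A)}{|a_{ii}|}}$, and substitute into the bound (\ref{m1}). Your version is slightly more complete in that it explicitly checks the admissibility condition $\mu>\frac{r_1(A)}{|a_{11}|}$, which the paper leaves implicit.
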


\begin{proof} Since $\frac{h_1(A)}{|a_{11}|}\leq \max\limits_{i\neq 1}
\frac{h_i(A)}{|a_{ii}|}$, we have  $ \mu
=1+\frac{h_1(A)}{|a_{11}|}-\max\limits_{i\neq 1}
\frac{h_i(A)}{|a_{ii}|}\leq 1$,  $ \max\{ \mu,1\}=1$ and
\[\max\left\{\frac{1}{\mu -
\frac{h_1(A)}{|a_{11}|}}, \frac{1}{ 1-\max\limits_{i\neq
1}\frac{h_i(A)}{|a_{ii}|}}\right\}=\frac{1}{ 1-\max\limits_{i\neq
1}\frac{h_i(A)}{|a_{ii}|}}=\frac{1}{ 1-\max\limits_{i\in
N}\frac{h_i(A)}{|a_{ii}|}}.\] Hence,
\[\max\{ \mu,1\}
\max\limits_{i\in N} \frac{z_i(A)}{|a_{ii}|}\max\left\{\frac{1}{\mu
- \frac{h_1(A)}{|a_{11}|}}, \frac{1}{ 1-\max\limits_{i\neq
1}\frac{h_i(A)}{|a_{ii}|}}\right\}  =\frac{\max\limits_{i\in
N}\frac{z_i(A)}{|a_{ii}|}}{ 1-\max\limits_{i\in
N}\frac{h_i(A)}{|a_{ii}|}}.\] The proof is completed. \end{proof}

\subsection{\textbf{the optimal value of  $\mu$ for the bound (\ref{m10})}}
First, we consider the Nekrasov matrix $A=[a_{ij}] \in C^{n,n}$ with
\[ \label{eq2.40}|a_{11}| - h_1(A) < \min\limits_{i\neq
1}(|a_{ii}|-h_i(A)),\] and give the following lemmas.

\begin{lem}\label{le2.000}
Let $a, b$ and $c$ be positive real numbers, and $0<a-b<c$. Then
\[ \frac{b+c}{a}<\frac{c}{a-b}.\]
\end{lem}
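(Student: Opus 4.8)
The plan is to reduce the claimed inequality to the hypothesis $a-b<c$ by clearing denominators. First I would observe that both denominators are positive: $a>0$ by assumption, and $a-b>0$ because the hypothesis $0<a-b<c$ forces $a-b>0$. Consequently the inequality $\frac{b+c}{a}<\frac{c}{a-b}$ is equivalent, after cross-multiplying by the positive quantity $a(a-b)$, to the polynomial inequality $(b+c)(a-b)<ca$, with the direction of the inequality preserved precisely because we multiply by a positive number.

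Next I would expand the left-hand side and cancel. Expanding gives
\[(b+c)(a-b)=ab-b^{2}+ca-cb,\]
so the inequality $(b+c)(a-b)<ca$ becomes $ab-b^{2}+ca-cb<ca$, i.e. after subtracting $ca$ from both sides,
\[ab-b^{2}-cb<0.\]
Factoring out $b$ yields $b\,(a-b-c)<0$.

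Finally, since $b>0$, dividing by $b$ shows this last inequality is equivalent to $a-b-c<0$, that is, $a-b<c$, which is exactly the given hypothesis. Reading the chain of equivalences backwards then establishes $\frac{b+c}{a}<\frac{c}{a-b}$, completing the proof.

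There is essentially no deep obstacle here; the only point requiring care is to confirm that $a-b>0$ (so that the second denominator is positive and cross-multiplication is legitimate without reversing the inequality), and to track the sign of $b$ in the final division. Both are immediate from the standing assumption that $a$, $b$, $c$ are positive and $0<a-b<c$.
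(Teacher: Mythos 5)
Your proof is correct and is essentially the same as the paper's: both reduce the inequality to the factored expression $b\bigl(c-(a-b)\bigr)>0$ over the positive quantity $a(a-b)$, yours by cross-multiplying and the paper's by computing the difference of the two fractions directly. The key observations (positivity of $a$, $a-b$, and $b$) are the same in both arguments.
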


\begin{proof} we only prove that $ \frac{c}{a-b}-\frac{b+c}{a}>0.$
In fact,
\begin{eqnarray*}  \frac{c}{a-b}-\frac{b+c}{a}&=&
\frac{ac-(a-b)(b+c)}{a(a-b)}\\
&=&\frac{ac-(ab+ac-b^2-bc)}{a(a-b)}\\
&=&\frac{-ab+b^2+bc}{a(a-b)}\\
&=&\frac{b(c-(a-b))}{a(a-b)}>0.
\end{eqnarray*}
The proof is completed. \end{proof}

\begin{lem}\label{le 2.40}
Let $A=[a_{ij}] \in C^{n,n}$ be a Nekrasov matrix with
\[|a_{11}| - h_1(A) < \min\limits_{i\neq
1}(|a_{ii}|-h_i(A)).\] Then
\begin{equation} \label{eq2.60} 1<\frac{\min\limits_{i\neq
1}(|a_{ii}|-h_i(A))+ h_1(A) }{|a_{11}|}<\frac{\min\limits_{i\neq
1}(|a_{ii}|-h_i(A)) }{|a_{11}|- h_1(A)}.\end{equation}
\end{lem}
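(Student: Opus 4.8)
The plan is to reduce both inequalities in (\ref{eq2.60}) to elementary facts, with the right-hand inequality being a direct instance of Lemma~\ref{le2.000}. To keep the notation manageable I would abbreviate $m=\min\limits_{i\neq 1}(|a_{ii}|-h_i(A))$, so that the assertion reads
\[
1<\frac{m+h_1(A)}{|a_{11}|}<\frac{m}{|a_{11}|-h_1(A)}.
\]

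First I would dispatch the left inequality. Since $|a_{11}|>0$, multiplying through shows that $1<\frac{m+h_1(A)}{|a_{11}|}$ is equivalent to $|a_{11}|-h_1(A)<m$, which is precisely the standing hypothesis $|a_{11}|-h_1(A)<\min\limits_{i\neq 1}(|a_{ii}|-h_i(A))$. Thus the first inequality is merely a rearrangement of the assumption and requires no further work.

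For the right inequality I would invoke Lemma~\ref{le2.000} with the substitution $a=|a_{11}|$, $b=h_1(A)$, and $c=m$. Under this choice the conclusion $\frac{b+c}{a}<\frac{c}{a-b}$ of that lemma reads exactly $\frac{m+h_1(A)}{|a_{11}|}<\frac{m}{|a_{11}|-h_1(A)}$, so it suffices to verify its three hypotheses. Positivity of $a=|a_{11}|$ is immediate since the diagonal entries are nonzero; positivity of $c=m$ follows from the defining property of a Nekrasov matrix, namely $|a_{ii}|>h_i(A)$ for every $i$, which gives $|a_{ii}|-h_i(A)>0$ and hence $m>0$; and $a-b=|a_{11}|-h_1(A)>0$ again by the Nekrasov condition at $i=1$. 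The remaining requirement $a-b<c$ is exactly the hypothesis of the present lemma. Hence Lemma~\ref{le2.000} applies and yields the right inequality of (\ref{eq2.60}).

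Because the argument is essentially a substitution into an already-established inequality, I do not expect a genuine obstacle. The only point demanding care is the positivity bookkeeping: besides $a>0$, $c>0$, and $a-b>0$ (all furnished by $|a_{ii}|>h_i(A)$), Lemma~\ref{le2.000} also presumes $b=h_1(A)>0$, i.e.\ that the first row carries some off-diagonal weight. This is the natural non-degeneracy hypothesis implicit in (\ref{eq2.60}); with it in force, both inequalities follow at once, and the conclusion follows.
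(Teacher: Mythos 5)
Your proof is correct and takes essentially the same route as the paper: the left inequality is a rearrangement of the hypothesis $|a_{11}|-h_1(A)<\min_{i\neq 1}(|a_{ii}|-h_i(A))$ (using $|a_{11}|>0$), and the right inequality comes from substituting $a=|a_{11}|$, $b=h_1(A)$, $c=\min_{i\neq 1}(|a_{ii}|-h_i(A))$ into Lemma~\ref{le2.000}, exactly as the paper does. If anything you are more careful than the paper, which does not remark that strictness of the second inequality requires the implicit non-degeneracy condition $h_1(A)>0$ demanded by the positivity hypotheses of Lemma~\ref{le2.000}.
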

\begin{proof}
Since $A$ is a Nekrasov matrix, we have $|a_{11}| - h_1(A)>0$,
consequently,  the first Inequality in (\ref{eq2.60}) holds.
Moreover, Let $a=|a_{11}|$, $ b=h_1(A)$ and $ c=\min\limits_{i\neq
1}(|a_{ii}|-h_i(A))$. Then from Lemma \ref{le2.000}, the second
holds.
\end{proof}

We now give an interval of $\mu$ such that the bound (\ref{m10}) of
Theorem \ref{th 2.1} is less than the bound (\ref{eq1.10}) of
Theorem \ref{th1.2}.

\begin{lem}\label{le 2.50}
Let $A=[a_{ij}] \in C^{n,n}$ be a Nekrasov matrix with
\[|a_{11}| - h_1(A) < \min\limits_{i\neq
1}(|a_{ii}|-h_i(A)).\] Then for each $\mu \in \left(1,
\frac{\min\limits_{i\neq 1}(|a_{ii}|-h_i(A)) }{|a_{11}|-
h_1(A)}\right)$,
\begin{eqnarray} \label{eq2.5}||A^{-1}||_\infty &\leq&\frac{ \max\{ \mu,1\} \max\limits_{i\in n}
z_i(A) }{\min\left\{\mu|a_{11}| - h_1(A), \min\limits_{i\neq
1}(|a_{ii}|-h_i(A))\right\}}\nonumber
\\& <&\frac{\max\limits_{i\in
N}z_i(A)} { \min\limits_{i\in N}(|a_{ii}|-
h_i(A))}.\nonumber\end{eqnarray}
\end{lem}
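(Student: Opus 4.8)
The plan is to mirror the two–region argument of Lemma~\ref{le 2.5}, but with the ratios $h_i(A)/|a_{ii}|$ replaced by the differences $|a_{ii}|-h_i(A)$ and with Lemma~\ref{le 2.40} playing the role that Lemma~\ref{le 2.4} played before. The first (displayed) inequality in the statement is nothing but the bound (\ref{m10}) of Theorem~\ref{th 2.1}, so all the content is in the strict second inequality. I would open by recording two simplifications that hold on the whole interval: the hypothesis $|a_{11}|-h_1(A)<\min_{i\neq 1}(|a_{ii}|-h_i(A))$ forces $\min_{i\in N}(|a_{ii}|-h_i(A))=|a_{11}|-h_1(A)$, so the competing bound (\ref{eq1.10}) equals $\dfrac{\max_{i\in N}z_i(A)}{|a_{11}|-h_1(A)}$; and since every admissible $\mu$ satisfies $\mu>1$, we have $\max\{\mu,1\}=\mu$ throughout.

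Next I would use Lemma~\ref{le 2.40} to split the interval $\left(1,\frac{\min_{i\neq 1}(|a_{ii}|-h_i(A))}{|a_{11}|-h_1(A)}\right)$ at the interior breakpoint $\mu^{\ast}=\frac{\min_{i\neq 1}(|a_{ii}|-h_i(A))+h_1(A)}{|a_{11}|}$. Inequality (\ref{eq2.60}) guarantees $1<\mu^{\ast}<\frac{\min_{i\neq 1}(|a_{ii}|-h_i(A))}{|a_{11}|-h_1(A)}$, so both subintervals are nonempty. The reason this is the right place to cut is that $\mu|a_{11}|-h_1(A)\le\min_{i\neq 1}(|a_{ii}|-h_i(A))$ holds precisely when $\mu\le\mu^{\ast}$, and this decides which of the two terms attains the minimum in the denominator of (\ref{m10}).

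On the first piece $\mu\in(1,\mu^{\ast}]$ the denominator is $\mu|a_{11}|-h_1(A)$, so (\ref{m10}) reduces to $\frac{\mu\max_{i\in N}z_i(A)}{\mu|a_{11}|-h_1(A)}$. I would then examine $f(x)=\frac{x}{x|a_{11}|-h_1(A)}$ and note $f'(x)=\frac{-h_1(A)}{(x|a_{11}|-h_1(A))^{2}}<0$, so $f$ is strictly decreasing; hence $f(\mu)<f(1)=\frac{1}{|a_{11}|-h_1(A)}$ for $\mu>1$, and multiplying through by $\max_{i\in N}z_i(A)$ gives the desired strict inequality. On the second piece $\mu\in\left[\mu^{\ast},\frac{\min_{i\neq 1}(|a_{ii}|-h_i(A))}{|a_{11}|-h_1(A)}\right)$ the denominator is $\min_{i\neq 1}(|a_{ii}|-h_i(A))$, so (\ref{m10}) becomes the expression $\frac{\mu\max_{i\in N}z_i(A)}{\min_{i\neq 1}(|a_{ii}|-h_i(A))}$, which is strictly increasing in $\mu$; evaluating at the (excluded) right endpoint gives exactly $\frac{\max_{i\in N}z_i(A)}{|a_{11}|-h_1(A)}$, so strictness of the open right end delivers the strict bound there as well. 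Combining the two pieces covers the whole interval and completes the argument.

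The only genuinely load-bearing step—just as in Lemma~\ref{le 2.5}—is the appeal to Lemma~\ref{le 2.40}: it is what certifies that the breakpoint $\mu^{\ast}$ lies strictly between the two endpoints, so that the decreasing branch and the increasing branch together account for every $\mu$ in the interval and no admissible value is left uncovered. Once that is in place, the monotonicity computations for $f$ and for the linear-in-$\mu$ expression are entirely routine.
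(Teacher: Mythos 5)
Your proposal is correct and follows essentially the same route as the paper: it invokes Lemma~\ref{le 2.40} to certify that the breakpoint $\mu^{\ast}=\frac{\min_{i\neq 1}(|a_{ii}|-h_i(A))+h_1(A)}{|a_{11}|}$ lies strictly inside the interval, splits at that point according to which term achieves the minimum in the denominator of (\ref{m10}), and uses the monotone decrease of $\frac{x}{x|a_{11}|-h_1(A)}$ on the left piece (compared at $x=1$) and the monotone increase of the linear expression on the right piece (compared at the excluded right endpoint) to get the strict inequality against $\frac{\max_{i}z_i(A)}{|a_{11}|-h_1(A)}=\frac{\max_{i}z_i(A)}{\min_{i}(|a_{ii}|-h_i(A))}$. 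This matches the paper's two-case argument step for step.
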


\begin{proof} From Lemma \ref{le 2.40}, we have
\[\mu \in \left(1,\frac{\min\limits_{i\neq
1}(|a_{ii}|-h_i(A))+ h_1(A) }{|a_{11}|}\right]\bigcup
\left[\frac{\min\limits_{i\neq 1}(|a_{ii}|-h_i(A))+ h_1(A)
}{|a_{11}|}, \frac{\min\limits_{i\neq 1}(|a_{ii}|-h_i(A))
}{|a_{11}|- h_1(A)}\right).\] and $ \max\{ \mu,1\}=\mu$.

(I) For $ \mu \in \left(1,\frac{\min\limits_{i\neq
1}(|a_{ii}|-h_i(A))+ h_1(A) }{|a_{11}|}\right],$ then \[
\mu|a_{11}|\leq \min\limits_{i\neq 1}(|a_{ii}|-h_i(A))+ h_1(A) ,\]
that is,
\[\mu|a_{11}| - h_1(A) \leq \min\limits_{i\neq 1}(|a_{ii}|-h_i(A)) .\]
Therefore, \[ \frac{ \max\{ \mu,1\} }{\min\left\{\mu|a_{11}| -
h_1(A), \min\limits_{i\neq 1}(|a_{ii}|-h_i(A))\right\}}=
\frac{\mu}{\mu|a_{11}|-h_1(A)}.\] Consider the function
$f(x)=\frac{x}{|a_{11}|x-h_1(A)}, ~x\in
 \left[1,\frac{\min\limits_{i\neq
1}(|a_{ii}|-h_i(A))+ h_1(A) }{|a_{11}|}\right]$. It is easy  to
prove that $f(x)$ is a monotonically decreasing function of $x$.
Hence, for any $ \mu \in
 \left(1,\frac{\min\limits_{i\neq
1}(|a_{ii}|-h_i(A))+ h_1(A) }{|a_{11}|}\right]$,
\[f(\mu)< f(1),\]
i.e., \[ \frac{\mu}{\mu|a_{11}|-h_1(A)}< \frac{1}{ |a_{11}|
-h_1(A)}= \frac{1}{\min\limits_{i\in N}(|a_{ii}|-h_i(A))},\] which
implies that
\[ \frac{\mu \max\limits_{i\in N}z_i(A)}{\mu|a_{11}|-h_1(A)}<
\frac{ \max\limits_{i\in N}z_i(A)}{\min\limits_{i\in
N}(|a_{ii}|-h_i(A))}.\]
 Hence,
 \[\frac{ \max\{ \mu,1\} \max\limits_{i\in n}
z_i(A) }{\min\left\{\mu|a_{11}| - h_1(A), \min\limits_{i\neq
1}(|a_{ii}|-h_i(A))\right\}}  <\frac{\max\limits_{i\in N}z_i(A)} {
\min\limits_{i\in N}(|a_{ii}|- h_i(A))}.\]

(II) For $ \mu \in \left[\frac{\min\limits_{i\neq
1}(|a_{ii}|-h_i(A))+ h_1(A) }{|a_{11}|}, \frac{\min\limits_{i\neq
1}(|a_{ii}|-h_i(A)) }{|a_{11}|- h_1(A)}\right),$ then
\[ \mu|a_{11}|\geq \min\limits_{i\neq
1}(|a_{ii}|-h_i(A))+h_1(A),\] that is, \[ \mu|a_{11}|-h_1(A)\geq
\min\limits_{i\neq 1}(|a_{ii}|-h_i(A)).\] Therefore, \[ \frac{
\max\{ \mu,1\} }{\min\left\{\mu|a_{11}| - h_1(A), \min\limits_{i\neq
1}(|a_{ii}|-h_i(A))\right\}}= \frac{\mu}{\min\limits_{i\neq
1}(|a_{ii}|-h_i(A))}.\] Consider the function
$g(x)=\frac{x}{\min\limits_{i\neq 1}(|a_{ii}|-h_i(A))}, ~x\in
\left[\frac{\min\limits_{i\neq 1}(|a_{ii}|-h_i(A))+ h_1(A)
}{|a_{11}|}, \frac{\min\limits_{i\neq 1}(|a_{ii}|-h_i(A))
}{|a_{11}|- h_1(A)}\right]$. Obviously, $g(x)$ is a monotonically
increasing function of $x$. Hence, for any $ \mu \in
\left[\frac{\min\limits_{i\neq 1}(|a_{ii}|-h_i(A))+ h_1(A)
}{|a_{11}|}, \frac{\min\limits_{i\neq 1}(|a_{ii}|-h_i(A))
}{|a_{11}|- h_1(A)}\right)$,
\[g(\mu)< g\left(  \frac{\min\limits_{i\neq 1}(|a_{ii}|-h_i(A))
}{|a_{11}|- h_1(A)}\right),\] that is,
\[ \frac{\mu}{\min\limits_{i\neq
1}(|a_{ii}|-h_i(A))} <\frac{1}{ |a_{11}| -h_1(A)}=
\frac{1}{\min\limits_{i\in N}(|a_{ii}|-h_i(A))}, \] which implies
that
\[ \frac{\mu \max\limits_{i\in
N}z_i(A)}{\min\limits_{i\neq 1}(|a_{ii}|-h_i(A))} <\frac{
\max\limits_{i\in N}z_i(A)}{\min\limits_{i\in
N}(|a_{ii}|-h_i(A))}.\]Hence,
 \[\frac{ \max\{ \mu,1\} \max\limits_{i\in n}
z_i(A) }{\min\left\{\mu|a_{11}| - h_1(A), \min\limits_{i\neq
1}(|a_{ii}|-h_i(A))\right\}}  <\frac{\max\limits_{i\in N}z_i(A)} {
\min\limits_{i\in N}(|a_{ii}|- h_i(A))}.\] The conclusion follows
from (I) and (II).\end{proof}

Similar to the proof of Theorem \ref{th2.2},  we can easily
determine the optimal value of $\mu$ by Lemma \ref{le 2.50}.

\begin{thm}\label{th2.20}
Let $A=[a_{ij}] \in C^{n,n}$ be a Nekrasov matrix with
\[|a_{11}| - h_1(A) < \min\limits_{i\neq
1}(|a_{ii}|-h_i(A)).\] Then
\begin{eqnarray} \label{eq2.70} &\min\left\{ \frac{ \max\{ \mu,1\} }{\min\left\{\mu|a_{11}| - h_1(A), \min\limits_{i\neq
1}(|a_{ii}|-h_i(A))\right\}}: \mu \in \left(1,
\frac{\min\limits_{i\neq 1}(|a_{ii}|-h_i(A)) }{|a_{11}|-
h_1(A)}\right)\right\}\nonumber&
\\&= \frac{\min\limits_{i\neq 1}(|a_{ii}|-h_i(A))+ h_1(A)
}{|a_{11}|\min\limits_{i\neq 1}(|a_{ii}|-h_i(A))}.&\end{eqnarray}
Furthermore,
\begin{equation} \label{eq2.80}||A^{-1}||_\infty \leq  \frac{ \max\limits_{i\in N}z_i(A) \left(\min\limits_{i\neq 1}(|a_{ii}|-h_i(A))+ h_1(A)
\right)}{|a_{11}|\min\limits_{i\neq 1}(|a_{ii}|-h_i(A))}
<\frac{\max\limits_{i\in N}z_i(A)} { \min\limits_{i\in N}(|a_{ii}|-
h_i(A))}.\end{equation}
\end{thm}

\begin{rmk} Theorem \ref{th2.20} provides a method to determine the optimal value of $\mu$
for a Nekrasov matrix $A=[a_{ij}] \in C^{n,n}$ with
\[|a_{11}| - h_1(A) < \min\limits_{i\neq
1}(|a_{ii}|-h_i(A)).\]  Also consider the matrix $A_1$. By
computation, we get
\[ |a_{11}| - h_1(A)=  3.8000< 5.2641= \min\limits_{i\neq
1}(|a_{ii}|-h_i(A)).\] Hence, by Theorem \ref{th2.20}, we can obtain
that the bound (\ref{m10}) in Theorem \ref{th 2.1} reaches its
minimum\[ \frac{\max\limits_{i\in N}z_i(A)\min\limits_{i\neq
1}(|a_{ii}|-h_i(A))+ h_1(A) }{|a_{11}|\min\limits_{i\neq
1}(|a_{ii}|-h_i(A))} =  0.4594\] at $\mu=\frac{\min\limits_{i\neq
1}(|a_{ii}|-h_i(A))+ h_1(A) }{|a_{11}|}= 1.2092$ (also see Figure
2).
\end{rmk}

Next, we study the bound (\ref{m10}) in Theorem \ref{th 2.1} for the
Nekrasov matrix $A=[a_{ij}] \in C^{n,n}$ with
\[|a_{11}| - h_1(A) \geq \min\limits_{i\neq
1}(|a_{ii}|-h_i(A)).\]

\begin{thm} \label{th2.30}
Let $A=[a_{ij}] \in C^{n,n}$ be a Nekrasov matrix with
\[|a_{11}| - h_1(A) \geq \min\limits_{i\neq
1}(|a_{ii}|-h_i(A)).\] Then we can take $\mu
=\frac{\min\limits_{i\neq 1}(|a_{ii}|-h_i(A))+ h_1(A) }{|a_{11}|}$
such that
\begin{eqnarray} ||A^{-1}||_\infty &\leq&\frac{ \max\{ \mu,1\} \max\limits_{i\in n}
z_i(A) }{\min\left\{\mu|a_{11}| - h_1(A), \min\limits_{i\neq
1}(|a_{ii}|-h_i(A))\right\}}\nonumber
\\& =&\frac{\max\limits_{i\in
N}z_i(A)} { \min\limits_{i\in N}(|a_{ii}|-
h_i(A))}.\nonumber\end{eqnarray}
\end{thm}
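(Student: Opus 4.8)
The plan is to mirror the argument of Theorem \ref{th2.3}, which treated the analogous borderline case for the first bound, now applying it to the bound (\ref{m10}) of Theorem \ref{th 2.1}. The whole proof reduces to substituting the prescribed value of $\mu$ into the right-hand side of (\ref{m10}) and checking that the hypothesis $|a_{11}| - h_1(A) \geq \min\limits_{i\neq 1}(|a_{ii}|-h_i(A))$ forces every piece to collapse to the claimed form. First I would verify that $\mu =\frac{\min\limits_{i\neq 1}(|a_{ii}|-h_i(A))+ h_1(A)}{|a_{11}|}$ is admissible for Theorem \ref{th 2.1}, i.e. that $\mu > \frac{r_1(A)}{|a_{11}|}=\frac{h_1(A)}{|a_{11}|}$; this is immediate since $A$ being Nekrasov gives $\min\limits_{i\neq 1}(|a_{ii}|-h_i(A))>0$. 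Next, the hypothesis is exactly equivalent to $\min\limits_{i\neq 1}(|a_{ii}|-h_i(A))+ h_1(A) \leq |a_{11}|$, i.e. to $\mu \leq 1$, so that $\max\{\mu,1\}=1$.

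The key computation is that this choice of $\mu$ equalizes the two terms under the inner minimum: by definition $\mu|a_{11}| = \min\limits_{i\neq 1}(|a_{ii}|-h_i(A))+ h_1(A)$, whence $\mu|a_{11}| - h_1(A) = \min\limits_{i\neq 1}(|a_{ii}|-h_i(A))$. Therefore
\[\min\left\{\mu|a_{11}| - h_1(A),\ \min\limits_{i\neq 1}(|a_{ii}|-h_i(A))\right\} = \min\limits_{i\neq 1}(|a_{ii}|-h_i(A)).\]
Moreover, the same hypothesis gives $\min\limits_{i\in N}(|a_{ii}|-h_i(A)) = \min\limits_{i\neq 1}(|a_{ii}|-h_i(A))$, since the $i=1$ term is not the smallest. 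Substituting $\max\{\mu,1\}=1$ together with the collapsed denominator into (\ref{m10}) then yields
\[||A^{-1}||_\infty \leq \frac{\max\limits_{i\in N}z_i(A)}{\min\limits_{i\neq 1}(|a_{ii}|-h_i(A))} = \frac{\max\limits_{i\in N}z_i(A)}{\min\limits_{i\in N}(|a_{ii}|-h_i(A))},\]
which is precisely the asserted equality with the bound (\ref{eq1.10}).

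There is essentially no hard step here. Unlike the strict-inequality setting of Lemma \ref{le 2.50} and Theorem \ref{th2.20}, no monotonicity of the auxiliary functions $f$ and $g$ is needed, because the chosen $\mu$ sits exactly at the crossover point where $\mu|a_{11}|-h_1(A)$ meets $\min\limits_{i\neq 1}(|a_{ii}|-h_i(A))$, and the hypothesis only guarantees $\mu\leq 1$ rather than $\mu>1$. The only point requiring care is the bookkeeping: confirming that the index $i=1$ drops out of both the inner minimum and the full minimum $\min\limits_{i\in N}$, and recording that the admissibility condition $\mu>\frac{h_1(A)}{|a_{11}|}$ still holds so that Theorem \ref{th 2.1} may legitimately be invoked with this value of $\mu$.
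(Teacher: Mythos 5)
Your proposal is correct and follows essentially the same route as the paper: set $\max\{\mu,1\}=1$ from the hypothesis, observe that the chosen $\mu$ makes $\mu|a_{11}|-h_1(A)$ equal to $\min_{i\neq 1}(|a_{ii}|-h_i(A))$, and identify the resulting denominator with $\min_{i\in N}(|a_{ii}|-h_i(A))$. Your explicit check that $\mu>\frac{h_1(A)}{|a_{11}|}$ (so Theorem \ref{th 2.1} applies) is a small point the paper leaves implicit, but otherwise the arguments coincide.
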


\begin{proof} since $|a_{11}| - h_1(A) \geq \min\limits_{i\neq
1}(|a_{ii}|-h_i(A))$, we have \[ \mu =\frac{\min\limits_{i\neq
1}(|a_{ii}|-h_i(A))+ h_1(A) }{|a_{11}|}\leq 1,\] $ \max\{ \mu,
~1\}=1$, and
\[\frac{ \max\{ \mu,1\} }{\min\left\{\mu|a_{11}| - h_1(A), \min\limits_{i\neq
1}(|a_{ii}|-h_i(A))\right\}}= \frac{1 }{ \min\limits_{i\in
N}(|a_{ii}|- h_i(A))}.\] Hence,
\[\frac{ \max\{ \mu,1\} \max\limits_{i\in n}
z_i(A) }{\min\left\{\mu|a_{11}| - h_1(A), \min\limits_{i\neq
1}(|a_{ii}|-h_i(A))\right\}}  =\frac{\max\limits_{i\in N}z_i(A)} {
\min\limits_{i\in N}(|a_{ii}|- h_i(A))}. \] The proof is completed.
\end{proof}

\begin{rmk} (I) Theorems \ref{th2.2} and \ref{th2.3} provide the value of $\mu$, i.e.,
\[\mu=1+\frac{h_1(A)}{|a_{11}|}-\max\limits_{i\neq 1}
\frac{h_i(A)}{|a_{ii}|}\] such that the bound (\ref{m1}) in Theorem
\ref{th 2.1} is not worse than the bound (\ref{eq1.1}) in theorem
\ref{th1.2} for a Nekrasov matrix $A=[a_{ij}] \in C^{n,n}$. In
particular, for the Nekrasov matrix $A$ with
$\frac{h_1(A)}{|a_{11}|}> \max\limits_{i\neq 1}
\frac{h_i(A)}{|a_{ii}|}$, the bound (\ref{m1}) is better than the
bound (\ref{eq1.1}).

(II) Theorems \ref{th2.20} and \ref{th2.30} provide the value of
$\mu$, i.e.,
\[\mu=\frac{\min\limits_{i\neq 1}(|a_{ii}|-h_i(A))+ h_1(A)
}{|a_{11}|}\] such that the bound (\ref{m10}) in Theorem \ref{th
2.1} is not worse than the bound (\ref{eq1.10}) in theorem
\ref{th1.2} for a Nekrasov matrix $A=[a_{ij}] \in C^{n,n}$. In
particular, for the Nekrasov matrix $A$ with $|a_{11}| - h_1(A) <
\min\limits_{i\neq 1}(|a_{ii}|-h_i(A))$, the bound (\ref{m10})  is
better than the bound (\ref{eq1.10}).
\end{rmk}

\section{Numerical Examples}

\begin{example} Consider the following five Nekrasov matrices in \cite{Cv3}:
\[A_2 =  \left[ \begin{array}{cccc}
  8  &1   &-0.2 &3.3 \\
 7   &13    &2 &-3   \\
 -1.3   &6.7    &13 &-2\\
 0.5 & 3 &1  & 6
\end{array} \right],~ A_3=\left[\begin{array}{cccc}
     21 &  -9.1 &-4.2&-2.1  \\
      -0.7& 9.1 &-4.2&-2.1   \\
      -0.7&   -0.7& 4.9&-2.1  \\
      -0.7&-0.7&-0.7&2.8
\end{array}
\right],\]
\[A_4 =  \left[ \begin{array}{cccc}
  5  &1   &0.2 &2 \\
 1   &21    &1 &-3   \\
 2   &0.5    &6.4 &-2\\
 0.5 & -1 &1  & 9
\end{array} \right],~ A_5=\left[\begin{array}{ccc}
     6 &  -3 &-2  \\
      -1& 11 &-8   \\
      -7&   -3&10
\end{array}
\right],\]
\[A_6=  \left[ \begin{array}{cccc}
  8  &-0.5   &-0.5 &-0.5 \\
 -9   &16    &-5 &-5   \\
 -6   &-4    &15 &-3\\
 -4.9 & -0.9 &-0.9  & 6
\end{array} \right].\]
Obviously, $A_2$, $A_3$ and $A_4$ are SDD. And it is not difficult
to verify that $A_4,~A_5$ satisfy the conditions in Theorems
\ref{th2.2} and \ref{th2.20} and $ A_2,~A_3,~A_6$ satisfy the
conditions in Theorems \ref{th2.3} and \ref{th2.30}.  We compute by
Matlab 7.0 the upper bounds for the infinity norm of the inverse of
$A_i$, $i=2,\ldots,6$, which are showed in Table 1. It is easy to
see  from Table 1 that this example illustrates Theorems
\ref{th2.2}, \ref{th2.3}, \ref{th2.20} and \ref{th2.30},.

\hspace{0.2cm} \noindent
\begin{tabular}[htbp]{cccccc}\hline
 Matrix                               & $A_2$    &$A_3$   &$A_4$   & $A_5$    &$A_6$ \\\hline
 Exact $||A^{-1}||_\infty$            &0.2390   &0.8759   &0.2707  & 1.1519   &0.4474      \\\hline
 Varah                                &1        &1.4286   &0.5556  & --       &--\\\hline
 The bound (\ref{eq1.1})              &0.8848   &1.8076   &0.6200  & 1.4909   &1.1557 \\\hline
 Theorems \ref{th2.2} or \ref{th2.3}  &0.8848   &1.8076   &0.5270  & 1.4266   &1.1557       \\\hline
 The bound (\ref{eq1.10})             &0.6885   &0.9676   &0.7937  & 2.4848   &0.5702 \\\hline
 Theorems \ref{th2.20} or \ref{th2.30}&0.6885   &0.9676   &0.5895  & 1.5923   &0.5702 \\\hline
\end{tabular}

\hspace{0.3cm} \noindent Table 1. The upper bounds for
$||A_{i}^{-1}||_\infty$, $i=2,\ldots,6$.
\end{example}


\section*{Acknowledgements}
This work is supported by National Natural Science Foundations of
China (11361074, 11326242) and Natural Science Foundations of Yunnan
Province (2013FD002).







\end{document}